\newtheorem{lemma}{Lemma}
\newtheorem{de}{Definition}
\newtheorem{teo}{Theorem}
\newfont{\hueca}{msbm10}
\begin{document}
\title[Leibniz algebras associated with representations of filiform Lie algebras]
{Leibniz algebras associated with representations of filiform Lie algebras}

\author{Sh.A. Ayupov, L.M. Camacho, A.Kh. Khudoyberdiyev, B.A. Omirov}
\address{[Sh.A. Ayupov --- A.Kh. Khudoyberdiyev --- B.A. Omirov] Institute of Mathematics. National
University of Uzbekistan, Dormon yoli str. 29, 100125, Tashkent
(Uzbekistan)}
\email{sh\_ayupov@mail.ru --- khabror@mail.ru --- omirovb@mail.ru}
\address{[L.M. Camacho ] Dpto. Matem\'{a}tica Aplicada I.
Universidad de Sevilla. Avda. Reina Mercedes, s/n. 41012 Sevilla.
(Spain)} \email{lcamacho@us.es}

%\address{}%
%\email{}%
%
\thanks{The third named author was partially supported by IMU/CDC-program, and he also supported
by Instituto de Mat\'{e}maticas de la Universidad de Sevilla. The
authors were supported by Ministerio de Econom\'{\i}a y
Competitividad
(Spain), grant MTM2013-43687-P (European FEDER support included).}%
%\subjclass{17A32, 17B30, 17B10}%
%\keywords{Filiform algebra, Heisenberg algebra, Fock representation, minimal faithful representation}%

%\date{}%
%\dedicatory{The firth author...}%
%\commby{}%

\maketitle
\begin{abstract}
In this paper we investigate Leibniz algebras whose quotient Lie algebra is a naturally graded filiform Lie algebra $n_{n,1}.$ We introduce a Fock module for the algebra $n_{n,1}$ and provide classification of Leibniz algebras $L$ whose corresponding Lie algebra $L/I$ is the algebra $n_{n,1}$ with condition that the ideal $I$ is a Fock $n_{n,1}$-module, where $I$ is the ideal generated by squares of elements from $L$.

We also consider Leibniz algebras with  corresponding Lie algebra $n_{n,1}$ and such that the action $I \times n_{n,1} \to I$ gives rise to a minimal faithful representation of $n_{n,1}$. The classification up to isomorphism of such Leibniz algebras is given for the case of $n=4.$
\end{abstract}

\medskip \textbf{AMS Subject Classifications (2010):
17A32, 17B30, 17B10.}

\textbf{Key words:}  Filiform algebra, Heisenberg algebra, Fock representation, minimal faithful representation.

\section{Introduction}

Leibniz algebras are  generalizations of Lie algebras
and they have
been firstly introduced by Loday in \cite{Loday} as a
non-antisymmetric version of Lie algebras.

However this kind of algebras was previously introduced and studied under the notion of $D$-algebras by D. Bloh \cite{B1}. Since the 1993 when Loday's work was published, many researchers have been
attracted to Leibniz algebras, with remarkable activity  during the last decade. Namely, the investigations have been mainly focused on low dimensional, nilpotent, solvable and other special classes of algebras (see \cite{Dzhu, Ayu01, Cabezas, ca4, CanKhud, le12, GomVid, ma2}).

Recall that the variety of Leibniz algebras is defined by the fundamental identity
$$[x,[y,z]]=[[x,y],z] - [[x,z],y].$$

In fact,  each non-Lie Leibniz algebra $L$ contains a non-trivial ideal (further denoted by $I$), which is the subspace spanned by squares of elements of the algebra $L$ .
 Moreover, it is readily to see that this ideal belongs to right annihilator of $L$, that is $[L,I]=0$. Note also that the ideal $I$ is the minimal ideal with the property that the quotient algebra $L/I$ is a Lie algebra.

One of the approaches to investigation of Leibniz algebras is a description of such algebras whose quotient algebra with respect to the ideal $I$ is a given Lie algebra.
In particular in  \cite{huel3} the description has been obtained for finite-dimensional complex Leibniz algebras whose quotient algebra is isomorphic to the simple Lie algebra $sl_2$.
In \cite{Bar}  D. Barnes showed that any finite-dimensional complex Leibniz algebra can be decomposed into a semidirect sum of the  solvable radical and a semi-simple Lie algebra (the analogue of Levi's theorem). Hence, we conclude that if the quotient algebra is isomorphic to a semi-simple Lie algebra, then knowing a module over this semi-simple Lie algebra, one can easily obtain the description of Leibniz algebras with this properties.

Therefore, it is important to study the case when the quotient Lie algebra is solvable, or moreover is nilpotent. Since the Heisenberg and filiform Lie algebras are well-known, it is natural to consider a Leibniz algebra whose  quotient Lie algebra is the Heisenberg algebra $H_n$ or the filiform Lie algebra $n_{n,1}$.  On the other hand, we recall that Heisenberg and filiform Lie algebras play an important role in mathematical physics and geometry, in particular in Quantum Mechanics (see for instance \cite{deJeu, Gelou, chinos, Konstantina}). Indeed, the Heisenberg Uncertainty Principle implies the non-compatibility of position and momentum observables acting on fermions. In \cite{Cal} some Leibniz algebras with the quotient algebra being Heisenberg algebra are described. In particular, a classification theorem was obtained for Leibniz algebras whose corresponding Lie algebra is $H_n$ and that the ${H_n}$-module $I$ is isomorphic to its Fock module.

In order to achieve the goal of our study we organize the paper as follows. The first two sections are devoted to introduction and preliminaries. In section 3 we introduce the Fock module for the filiform Lie algebra $n_{n,1}$ and give the classification of Leibniz algebras with corresponding Lie algebra $n_{n,1}$ under the condition that the   $n_{n,1}$-module $I$ is Fock module. In this section we also consider a generalization of this class of
algebras by considering the direct sum of filiform Lie algebras as the corresponding Lie algebras, and provide a classification theorem. Finally, in Section 4 we deal with the category of Leibniz algebras with $n_{n,1}$ as corresponding Lie algebra and such that the action $I \times n_{n,1} \to I$ gives rise to a minimal faithful
representation of $n_{n,1}$. A complete description of this category of algebras is given when dimension is equal to 4.

\section{Preliminaries}

In this section we give necessary definitions and preliminary results.

\begin{de} An algebra $(L,[-,-])$ over a field  $\mathbb{F}$   is called a Leibniz algebra if for any $x,y,z\in L$, the so-called Leibniz identity
\[ \big[[x,y],z\big]=\big[[x,z],y\big]+\big[x,[y,z]\big] \] holds.
\end{de}

For a Leibniz algebra $L$ consider the following  lower central series:
\[L^1=L,\qquad L^{k+1}=[L^k,L^1] \qquad k \geq 1.\]

Since the notions of right nilpotency and nilpotency coincide, we can define nilpotency as follows:
\begin{de} A Leibniz algebra $L$ is called nilpotent if there exists  $s\in\mathbb N $ such that $L^s=0$.
\end{de}

\begin{de} A Leibniz algebra $L$ is said to be filiform if $\dim L^i=n-i$, where $n=\dim L$ and $2\leq i \leq n$.
\end{de}

Now let us define a natural gradation for a filiform Leibniz algebra.

\begin{de} Given a filiform Leibniz algebra $L$, put $L_i=L^i/L^{i+1}, \ 1 \leq i\leq n-1$, and $Gr(L) = L_1 \oplus
L_2\oplus\dots \oplus L_{n-1}$. Then $[L_i,L_j]\subseteq L_{i+j}$ and we obtain the graded algebra $Gr(L)$. If $Gr(L)$ and $L$ are isomorphic, then we say that the algebra $L$ is naturally graded.
\end{de}

From \cite{Ver} it is well known that there are two types of naturally graded filiform Lie algebras. In fact, the second type will appear only in the case when the dimension of the algebra is even.

\begin{teo}[\cite{Ver}] \label{thm2.8} Any complex naturally graded filiform Lie algebra is isomorphic to one of the following non isomorphic algebras:
$$\begin{array}{ll}
n_{n,1}:&\left\{ [x_i,x_1]=-[x_1,x_i]=x_{i+1}, \quad 2\leq i \leq n-1.\right.\\[3mm]
Q_{2n}:&\left\{\begin{array}{ll}
[x_i,x_1] =  -[x_1,x_i]=x_{i+1},& 2\leq i \leq 2n-2,\\[2mm]
[x_i,x_{2n+1-i}]  =  -[x_{2n+1-i},x_i]=(-1)^i\,x_{2n},& 2\leq i
\leq n.
\end{array}\right.
\end{array}$$
\end{teo}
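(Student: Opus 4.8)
The plan is to reduce an arbitrary complex naturally graded filiform Lie algebra $L$ of dimension $n$ to a homogeneous adapted basis and then to exhaust the Jacobi identity. Since $L$ is filiform, $\dim L_1=2$ and $\dim L_i=1$ for $2\le i\le n-1$, and since $L$ is naturally graded, $L_{i+1}=[L_1,L_i]$ for $1\le i\le n-2$; in particular $L$ is generated by $L_1$. For each $i\in\{2,\dots,n-2\}$ the bracket defines a nonzero linear functional on the two-dimensional space $L_1$ (the scalar by which $\ad v$ acts from $L_i$ onto $L_{i+1}$), whose kernel is a line; choosing $e_1\in L_1$ off the finitely many such lines makes $\ad e_1\colon L_i\to L_{i+1}$ invertible for every $2\le i\le n-2$. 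Fix $e_2\in L_1$ independent of $e_1$ and set $e_{i+1}:=[e_1,e_i]$ for $2\le i\le n-1$; then $\{e_1,\dots,e_n\}$ is a homogeneous basis with
\[ [e_1,e_i]=e_{i+1}\ \ (2\le i\le n-1),\qquad [e_i,e_j]=c_{ij}\,e_{i+j-1}\ \ (c_{ij}=-c_{ji},\ c_{ij}=0\ \text{if}\ i+j-1>n), \]
where $c_{1j}=1$ and $c_{2j}=:\alpha_j$ for $3\le j\le n-1$; the choice $\alpha_j\equiv 0$ realizes $n_{n,1}$.

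Next I run the Jacobi identity. Writing $J(e_1,e_i,e_j)=0$ and comparing coefficients of $e_{i+j}$ gives $c_{i+1,j}=c_{ij}-c_{i,j+1}$ in the admissible range; starting from $c_{2,\bullet}=(\alpha_\bullet)$ this expresses every $c_{ij}$ with $i\ge 2$ as an explicit iterated finite difference of $\alpha_3,\dots,\alpha_{n-1}$ (e.g.\ $c_{3j}=\alpha_j-\alpha_{j+1}$, $c_{4j}=\alpha_j-2\alpha_{j+1}+\alpha_{j+2}$), and the boundary instances of that relation — the ones forced to read $c_{jj}=0$ or $c_{ij}=-c_{ji}$ — impose linear relations directly on the tuple $(\alpha_j)$ (already $\alpha_3=\alpha_4$ once $n\ge 5$, also $\alpha_4-2\alpha_5+\alpha_6=0$ once $n\ge 7$, and so on: vanishing of certain finite differences). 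Because $L$ is generated by $\{e_1,e_2\}$, once the multiplication is consistently anticommutative it suffices to check Jacobi on triples from $\{e_1,e_2\}$, which is automatic; so all remaining constraints are exactly those making the formulas above self-consistent, and after substituting the finite-difference expressions they form a finite system of equations quadratic in $\alpha_3,\dots,\alpha_{n-1}$. Finally, the changes of basis preserving the normalization $[e_1,e_i]=e_{i+1}$ are $e_1\mapsto\lambda e_1$, $e_2\mapsto\mu e_1+\nu e_2$ with $\lambda\nu\ne 0$, and they act on the parameters by one common affine substitution $\alpha_j\mapsto a\alpha_j+b$ ($a\ne0$, $b$ arbitrary).

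Solving the quadratic system modulo this affine group yields exactly the two families. The locus where all $\alpha_j$ are equal is carried to $\alpha_j\equiv 0$, giving $L\cong n_{n,1}$ (equivalently, $L^2$ is abelian). Otherwise a descending induction on the set of indices with $\alpha_j\ne 0$, driven by the quadratic relations, shows that up to the affine action the only surviving possibility is the one in which the bracket on $L^2$ is concentrated in the top line $L_{n-1}$ and, in suitable coordinates on $\langle e_2,\dots,e_{n-1}\rangle$, restricts there to a nondegenerate alternating form; this is precisely the multiplication table of the second algebra of the statement, and nondegeneracy of that form forces $n-2$, hence $n$, to be even (so for odd $n$ only $n_{n,1}$ occurs). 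The two algebras are non-isomorphic whenever $\dim\ge 6$, since $L^2=[L,L]$ is an isomorphism invariant and is abelian precisely for $n_{n,1}$; and both listed algebras are manifestly naturally graded filiform. This closes the proof.

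The step I expect to be the real work is the last one: the quadratic Jacobi relations proliferate with $n$, and the elimination must be organised — kill the lowest nonzero $\alpha_j$ using the affine freedom, propagate vanishing upward through the relations, isolate the single "symplectic" configuration, and extract the parity obstruction — carefully enough to exclude every intermediate parameter family. Everything preceding is routine linear bookkeeping.
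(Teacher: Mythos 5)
The paper itself does not prove this statement---it is imported from Vergne \cite{Ver}---so I am judging your proposal on its own terms, against the standard argument. Your setup is correct and is how that argument begins: the generic choice of $e_1$ making $\ad e_1\colon L_i\to L_{i+1}$ bijective, the adapted basis $e_{i+1}=[e_1,e_i]$, the recursion $c_{i+1,j}=c_{ij}-c_{i,j+1}$ coming from $J(e_1,e_i,e_j)=0$, and the affine action $\alpha_j\mapsto a\alpha_j+b$ of the admissible basis changes all check out. But there are two genuine gaps. The first is the claim that ``it suffices to check Jacobi on triples from $\{e_1,e_2\}$, which is automatic.'' Triples with all three entries in $\{e_1,e_2\}$ vanish trivially because the Jacobiator is alternating, so this checks nothing; the correct generation lemma says $J\equiv0$ once $J(g,y,z)=0$ for $g$ in a generating set and $y,z$ \emph{arbitrary}. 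Applied here, it leaves the whole family $J(e_2,e_i,e_j)=0$ for $i,j\ge2$, i.e.\ the quadratic relations $\alpha_i c_{i+1,j}-\alpha_{i+j-1}c_{ij}-\alpha_j c_{j+1,i}=0$, and these are not consequences of antisymmetry-consistency of the recursion. They are also indispensable: for $n=7$ the linear constraints alone reduce to $\alpha_3=\alpha_4$ and $\alpha_4-2\alpha_5+\alpha_6=0$, which after normalizing $\alpha_3=\alpha_4=0$ leave the spurious one-parameter family $\alpha_6=2\alpha_5$; it is precisely $J(e_2,e_3,e_4)=-\alpha_6 c_{34}\,e_7=2\alpha_5 e_7$ that kills it. So the sentence dismissing the higher Jacobi identities is wrong, and it sits uneasily next to your own subsequent appeal to a quadratic system.

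The second gap is the one you flag yourself: the elimination showing that, modulo the affine action, the linear-plus-quadratic system has exactly the two solutions (all $c_{ij}=0$ for $i,j\ge2$; or, for even $n$, the bracket concentrated in top degree with the alternating $(-1)^i$ pattern) is never carried out. That elimination \emph{is} Vergne's theorem---the proliferating relations, the descending induction on the support of $(\alpha_j)$, the isolation of the symplectic configuration and the parity obstruction are exactly where every intermediate family must be excluded---and describing it as ``the real work'' does not discharge it. As written, the proposal is a correct frame around the hole where the proof should be. (Minor points: the uniform formula $[e_i,e_j]=c_{ij}e_{i+j-1}$ is off by one when $i=1$; and the non-isomorphism argument via abelianness of $L^2=[L,L]$ is fine, since $Q_{2n}$ is only a separate item for dimension at least $6$.)
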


\medskip

Let ${ L}$ be a Leibniz algebra. The ideal $I$ generated by the squares of elements of the algebra $L$, that is by the set $\{[x,x]: x\in {L}\}$, plays an important role in the theory since it determines the (possible) non-Lie character of ${L}$. From the Leibniz identity, this ideal satisfies
%\begin{equation}\label{equi}
$$[{L},I]=0.$$
%\end{equation}
Clearly, quotient algebra $L / I$ is a Lie algebra, called the {\it corresponding Lie algebra} of $L$. The map $I \times L / I \to I$, $(i,\overline{x}) \mapsto [i,x]$ endows $I$ with a structure of $L / I$-module (see \cite{huel1, huel3}).

Denote by $Q(L) = L / I \oplus I,$ then the operation $(-,-)$ defines the Leibniz algebra structure on $Q(L),$ where $$(\overline{x},\overline{y}) = \overline{[x,y]}, \quad (\overline{x},i) = [x,i],
\quad (i, \overline{x}) = 0, \quad (i,j) = 0, \qquad x, y \in L, \ i,j \in I.$$

Therefore, given a Lie algebra $G$ and a $G-$module $M,$ we can construct a Leibniz algebra $(G, M)$ by the above construction.

The main problem which occurs in this connections is a description of Leibniz algebras $L,$ such that the corresponding Leibniz algebra $Q(L)$ is isomorphic to an a priory given algebra $(G, M).$

In the present paper we restrict our attention on the case where the Lie algebra $G$ is the naturally graded filiform Lie algebra $n_{n,1}$ and the $G-$module $M$ is the Fock module or a minimal faithful module.

\subsection{Fock module over the algebra $n_{n,1}$}
First we recall the notion of Fock module over the Heisenberg algebra $H_1,$ which was introduced in \cite{Cal}. It is known that if we denote by $\overline{{x}}$ the operator associated to position and by ${\frac{\overline{\partial}}{\partial x}}$ the one associated to momentum (acting for instance on the space $V$ of differentiable functions on a single variable), then $[\overline{x},{\frac{\overline{\partial}}{\partial x}}]=\overline{1}_V$. Thus we can identify the subalgebra generated by $\overline{1},\overline{{x}}$ and
${\frac{\overline{\partial}}{\partial x}}$ with the three-dimensional Heisenberg  algebra $H_1$ whose multiplication table in the basis $\{\overline{1},\overline{x},\frac{\overline{\partial}}{\partial x}\}$ has as unique non-zero product $[\overline{x},\frac{\overline{\partial}}{\partial x}]=\overline{1}$.

For a given Heisenberg algebra $H_1$ this representation gives rise to the so-called {\it Fock module} over $H_1$,  the linear space ${\mathbb{F}}[x]$ of polinomials on $x$ ($\mathbb{F}$ denotes
the algebraically closed field with zero characteristic) with the action induced by
\begin{equation}\label{Fo}
\begin{array}{lll} ( p(x),\overline{1})& \mapsto &
p(x)\\{} ( p(x),\overline{x})& \mapsto & xp(x)\\{}
(p(x),\frac{\overline{\partial}}{\partial x})& \mapsto & \frac
{\partial}{\partial x}(p(x))
\end{array}
\end{equation}
for any $p(x) \in \mathbb{F}[x].$

Now for any filiform Lie algebra $n_{n,1}$ the define  Fock module. Algebra  $n_{n,1}$ is characterized by the existence of a basis $\{x_1, x_2, \dots, x_n\}$ (see Theorem \ref{thm2.8}) and we denote
\begin{equation}\label{base1}  \frac {\overline{\delta}}{\delta
x}= x_1,\quad \overline{x^{n-i}} = (n-i)! x_i , \quad 2 \leq i \leq n.
\end{equation}

Then the action on $n_{n,1}$ is the linear space ${\mathbb{F}}[x]$, defined by \begin{equation}\label{Fo}
\begin{array}{lll}(p(x),\overline{1})& \mapsto & p(x)\\{}(p(x),\overline{x^i}) & \mapsto & x^ip(x)\\{}
(p(x),\frac{\overline{\delta}}{\delta x}) & \mapsto & \frac {\delta(p(x))}{\delta x}\end{array}
\end{equation}

In Section 3 we are interested in studying the class of Leibniz algebras $L$ satisfying that its corresponding Lie algebra is a filiform Lie algebra $n_{n,1}$ and the $n_{n,1}$-module $I$ is isomorphic to its Fock module.

This algebra will be  called {\it filiform Fock type} Leibniz algebra and denoted by $FR(n_{n,1})$, hence we will  consider the filiform Lie algebra together with its Fock representation.

\subsection{Minimal faithful representation on the algebra $n_{n,1}$}
It is known that the minimal faithful representations of $n_{n,1}$ have dimension $n$. More precisely, if $\{x_1, x_2, \dots, x_n\}$ is a basis of $n_{n,1},$ then as a minimal faithful representations we take linear transformations with the matrices $\sum\limits_{i=1}^{n-2}E_{i,i+1},$ $ E_{1,n}, E_{2,n}, \dots, E_{n-1,n},$ on the linear space $V=\{e_1, e_2, \dots, e_{n}\},$ where $E_{i,j}$ is a matrix with  ($i, j$) -th entry equal to $1$ and others are zero.

In other words, these linear transformations have the form $$\left(\begin{matrix}0&a_1&0&\dots&0&a_2\\0&0&a_1&\dots&0&a_3\\0&0&0&\dots&0&a_{4}\\
\vdots&\vdots&\vdots&\ddots&\vdots&\vdots\\0&0&0&\dots&a_1&a_{n-1}\\0&0&0&\dots&0&a_n
\\0&0&0&\dots&0&0\end{matrix}\right)$$

The faithful representation (isomorphism) $\varphi: n_{n,1} \rightarrow End(V)^-$ is defined as follows
$$\varphi(x_1) = \sum\limits_{i=1}^{n-2}E_{i,i+1},\quad \varphi(x_i) = E_{n+1-i,n}\quad 2 \leq i \leq n.$$
i.e.,
$$\varphi([x,y])(e)= [\varphi(x), \varphi(y)](e)=\varphi(y)\big(\varphi(x) (e)\big) - \varphi(x)\big(\varphi(y)
(e)\big),$$ where $x,y \in n_{n,1},$ $\varphi(x) \in End(V),$ $e \in V.$

Now, we construct a module $V \times H_{2m+1} \rightarrow V,$ such that $$(e, x) =   \varphi(x) e.$$ Then we obtain
$$\left\{\begin{array}{ll}(e_{i}, x_1) = e_{i-1}, & 2 \leq i \leq n-1,\\[1mm]
(e_{n}, x_j) = e_{n+1-j}, & 2 \leq j \leq n,\end{array}\right.$$
the remaining products in the action being zero.

In Section 4 we deal with the category of Leibniz algebras with $n_{n,1}$ as corresponding algebra and such that the action $I \times n_{n,1} \to I$ gives rise to a minimal faithful representation of $n_{n,1}$.

\section{Classification of Filiform Fock type  Leibniz algebras}

\subsection{Classification of $FR(n_{n,1})$}

Consider a naturally graded filiform Lie algebra $n_{n,1}$ with its Fock module ${\mathbb{F}}[x]$ under the action (\ref{Fo}). Since ${\mathbb{F}}[x]$ is infinite-dimensional we obtain a family of infinite-dimensional Leibniz algebras.

\begin{teo}\label{1} The Leibniz algebra $FR(n_{n,1})$ admits a basis
$$ \{\overline{1}, \overline{x}, \overline{x^2}, \dots, \overline{x^{n-2}},\frac {\overline{\delta}}{\delta
x}, \  x^{t} \ | \ t\in \mathbb{N}\cup \{0\}\}$$
 such  that the multiplication table in this basis has the form:
$$\begin{array}{lll} [\overline{x^i},\frac {\overline{\delta}}{\delta x}]=i\overline{x^{i-1}},
& 1 \leq i \leq n-2, \\[1mm]
[\frac {\overline{\delta}}{\delta
x},\overline{x^i}]=-i\overline{x^{i-1}},  & 1 \leq i \leq n-2, \\[1mm]
[x^t,\overline{x^i}] = x^{t+i}, & 1 \leq i \leq n-2,
\\[1mm]
[x^{t},\overline{1}] = x^{t}, & [x^t,\frac
{\overline{\delta}}{\delta x}] = tx^{t-1},
 \end{array}$$ where the omitted products are equal to zero.
\end{teo}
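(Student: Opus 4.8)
The plan is to build the Leibniz algebra $FR(n_{n,1})$ explicitly as the ''hemisemidirect''-type construction $Q(L) = n_{n,1} \oplus \mathbb{F}[x]$ described in the Preliminaries, and then simply read off the multiplication table from the Fock action \eqref{Fo} together with the bracket of $n_{n,1}$ from Theorem \ref{thm2.8}. Concretely, the underlying vector space has the Lie-algebra part $n_{n,1}$ with basis $\{x_1,\dots,x_n\}$ and the module part $I = \mathbb{F}[x]$ with basis $\{x^t : t \in \mathbb{N}\cup\{0\}\}$, and the only nonzero products are (i) the internal bracket of $n_{n,1}$, (ii) the action $[x^t, x_j] = (x^t, x_j)$ coming from \eqref{Fo}, while $[I,I]=0$ and $[n_{n,1}, I]=0$ since $I$ lies in the right annihilator.

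First I would substitute the change of basis \eqref{base1}, namely $\frac{\overline\delta}{\delta x} = x_1$ and $\overline{x^{n-i}} = (n-i)!\,x_i$ for $2\le i\le n$, into the structure constants of $n_{n,1}$. Writing $\overline{x^k} = k!\,x_{n-k}$ for $1\le k\le n-2$ (so $x_{n-k}$ corresponds to $k=n-i$), the relation $[x_i,x_1]=x_{i+1}$ becomes, after accounting for the factorials, $[\overline{x^k}, \frac{\overline\delta}{\delta x}] = k\,\overline{x^{k-1}}$ for $1\le k\le n-2$; the sign-reversed relation $[x_1,x_i]=-x_{i+1}$ gives $[\frac{\overline\delta}{\delta x}, \overline{x^k}] = -k\,\overline{x^{k-1}}$. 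This is the routine bookkeeping one expects, and it reproduces the first two lines of the asserted table. Note that the top element $\overline{1}$ (which corresponds to $x_n$, the center of $n_{n,1}$) has zero bracket with everything in the Lie part, consistent with the table.

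Next I would compute the action of $n_{n,1}$ on $\mathbb{F}[x]$ using \eqref{Fo}: $(x^t, \overline{1}) = x^t$, $(x^t, \overline{x^i}) = x^{t+i}$ for $1\le i\le n-2$, and $(x^t, \frac{\overline\delta}{\delta x}) = \frac{\delta}{\delta x}(x^t) = t\,x^{t-1}$. Under the identification of the module action with the Leibniz bracket $[x^t, y] := (x^t, y)$ for $y \in n_{n,1}$, these are exactly the last two lines of the claimed table, and all products $[x^s, x^t]$ within $I$ vanish because $I$ is abelian in $Q(L)$, while $[y, x^t] = 0$ for $y \in n_{n,1}$ because $I$ is in the right annihilator.

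The one genuine thing to check — and the place where an error could hide — is that this really does define a Leibniz algebra, i.e.\ that the Leibniz identity holds on all triples. For triples entirely inside $n_{n,1}$ it is just the Jacobi identity for $n_{n,1}$; for triples with at least two entries in $I$ both sides vanish; the only substantive case is $[[x^t, y], z]$ with $y,z \in n_{n,1}$, which is precisely the statement that $(-, -) : I \times n_{n,1} \to I$ is a right $n_{n,1}$-module structure, equivalently that $\varphi$ defined by \eqref{Fo} is a representation of $n_{n,1}$. This in turn reduces to verifying $[\overline{x}, \frac{\overline\delta}{\delta x}]$ acts as $\overline 1$ on $\mathbb{F}[x]$ (the Heisenberg relation $\frac{\partial}{\partial x}\circ x - x\circ\frac{\partial}{\partial x} = \mathrm{id}$) and that the higher operators $x^i$ act as multiplication, which commute with each other and satisfy $[x^i, \frac{\overline\delta}{\delta x}](p) = i\,x^{i-1}p$ — matching the bracket relations of $n_{n,1}$ after the identification \eqref{base1}. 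Finally, the basis $\{\overline 1, \overline x, \dots, \overline{x^{n-2}}, \frac{\overline\delta}{\delta x}\} \cup \{x^t : t\ge 0\}$ is obtained from the standard basis $\{x_1,\dots,x_n\}$ of $n_{n,1}$ together with the monomial basis of $\mathbb{F}[x]$ by the invertible rescaling \eqref{base1}, so it is indeed a basis, completing the proof.
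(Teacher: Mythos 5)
Your argument only establishes \emph{existence}: you build the split algebra $Q(L)=n_{n,1}\oplus\mathbb{F}[x]$ with all products between lifts of $n_{n,1}$ declared to have no component in $I$, verify it is a Leibniz algebra, and read off the table. But $FR(n_{n,1})$ denotes \emph{any} Leibniz algebra $L$ with $L/I\cong n_{n,1}$ and $I$ isomorphic to the Fock module, and the theorem asserts that every such $L$ admits a basis with the stated table. The nontrivial content is therefore a uniqueness statement which your construction bypasses entirely: for an arbitrary $L$ in this class, the bracket of two lifts of basis elements of $n_{n,1}$ is only determined modulo $I$, so a priori
$[\frac{\overline{\delta}}{\delta x},\frac{\overline{\delta}}{\delta x}]$, $[\overline{x^i},\overline{x^j}]$, $[\overline{x^i},\overline{1}]$, $[\overline{1},\frac{\overline{\delta}}{\delta x}]$, etc., each carry an arbitrary polynomial tail in $\mathbb{F}[x]$. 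By assuming from the outset that ``the only nonzero products are'' the Lie bracket and the module action, you have assumed precisely what has to be proved.

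The paper's proof addresses exactly this: it introduces unknown polynomials $q(x), r(x), p_i(x)$ for the products against $\overline{1}$, removes them by the change of basis $\frac{\overline{\delta}}{\delta x}' = \frac{\overline{\delta}}{\delta x}-q(x)$, $\overline{1}'=\overline{1}-r(x)$, $\overline{x^i}{}'=\overline{x^i}-p_i(x)$ (this works because $[p(x),\overline{1}]=p(x)$ under the Fock action), and then uses the Leibniz identity applied to triples ending in $\overline{1}$ to force all the remaining tails $a(x), b_{i,j}(x), c(x), d_i(x), g_i(x), h_i(x)$ to vanish. None of these steps appears in your proposal, so as written it does not prove the theorem; it only exhibits one member of the isomorphism class. (Your verification that the split algebra satisfies the Leibniz identity is fine, and is the routine part the paper leaves implicit, but it is not the point of the statement.)
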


\begin{proof}

Taking into account the action (\ref{Fo}) we conclude that $$ \{\overline{1}, \overline{x}_i,\frac {\overline{\delta}}{\delta x_i}, \  x_1^{t_1}x_2^{t_2}\dots x_k^{t_k} \ | \ t_i\in
\mathbb{N}\cup \{0\}, \ 1\leq i \leq k\}$$ is a basis of $FR(n_{n,1})$
and
$$[x^{t},\overline{1}] = x^{t}, \quad  [x^t,\frac {\overline{\delta}}{\delta x}] = tx^{t-1}, \quad
[x^t,\overline{x^i}] = x^{t+i}, \quad  1 \leq i \leq n-2.$$

Let us denote
$$[\frac {\overline{\delta}}{\delta x},\overline{1}]=q(x),\quad
[\overline{1},\overline{1}]=r(x), \quad
[\overline{x^i},\overline{1}]=p_i(x), \quad 1 \leq i \leq n-2,$$

Taking the following change of basis:
$$\frac {\overline{\delta}}{\delta x}^{\prime} = \frac
{\overline{\delta}}{\delta x} - q(x), \quad \overline{1}^{\prime}
= \overline{1} - r(x), \quad \overline{x^i}^{\prime} =
\overline{x^i} - p_i(x),\quad 1 \leq i \leq n-2,$$
 we derive
$$[\overline{x^i},\overline{1}]=0, \quad [\frac {\overline{\delta}}{\delta x},\overline{1}]=0, \quad
[\overline{1},\overline{1}]=0, \quad 1 \leq i \leq n-2.$$

We denote
$$\begin{array}{lll}[\frac {\overline{\delta}}{\delta x},\frac{\overline{\delta}}{\delta x}]=
a(x), & [\overline{x^i},\overline{x^j}]=b_{i,j}(x), & 1 \leq i, j
\leq n-2,\\[1mm]
[\overline{1}, \frac {\overline{\delta}}{\delta x}]= c(x), &
[\overline{x^i}, \frac {\overline{\delta}}{\delta x}]=
i\overline{x^{i-1}}+ d_{i}(x), & 1 \leq i \leq n-2,\\[1mm]
[\overline{1}, \overline{x^i}]= g_{i}(x),& [\frac
{\overline{\delta}}{\delta x},\overline{x^i}]= -
i\overline{x^{i-1}} +  h_{i}(x), & 1 \leq i \leq n-2.\end{array}$$

Consider the Leibniz identity
$$[[\frac {\overline{\delta}}{\delta x},
\frac{\overline{\delta}}{\delta x}],\overline{1}] = [\frac
{\overline{\delta}}{\delta x},[\frac {\overline{\delta}}{\delta
x},\overline{1}]] + [[\frac {\overline{\delta}}{\delta
x},\overline{1}],\frac {\overline{\delta}}{\delta x}] =0,$$

On the other hand,
$$[[\frac {\overline{\delta}}{\delta x},
\frac{\overline{\delta}}{\delta x}],\overline{1}] =
[a(x),\overline{1}] =a(x),$$ which implies $a(x)=0.$

Similarly, from the Leibniz identities
$$
\begin{array}{lllllllllll}b_{i,j}(x)&=& [b_{i,j}(x), \overline{1}] &=& [[\overline{x^i},\overline{x^j}],\overline{1}]
&=& [\overline{x^i},[\overline{x^j},\overline{1}]] &+&
[[\overline{x^i},\overline{1}],\overline{x^j}] &=&0,\\[1mm]
c(x)&=&[c(x),\overline{1} ] &=& [[\overline{1},
\frac{\overline{\delta}}{\delta x_i}],\overline{1}]
 &=& [\overline{1},[\frac {\overline{\delta}}{\delta
x_i},\overline{1}]] & + & [[\overline{1},\overline{1}],\frac
{\overline{\delta}}{\delta x_i}] &=&0,\\[1mm]
d_{i}(x)&=&[i\overline{x^{i-1}} + d_{i}(x),\overline{1}]
&=&[[\overline{x^i}, \frac {\overline{\delta}}{\delta
x}],\overline{1}] &=& [\overline{x^i},[\frac
{\overline{\delta}}{\delta x},\overline{1}]] &+&
[[\overline{x^i},\overline{1}],\frac {\overline{\delta}}{\delta
x}] & = &0,\\[1mm]
g_{i}(x)& =& [g_{i}(x),\overline{1}] &=& [[\overline{1},
\overline{x^i}],\overline{1}] & =&
[\overline{1},[\overline{x^i},\overline{1}]] & +&
[[\overline{1},\overline{1}],\overline{x^i}] & =&0,\\[1mm]
h_{i}(x)& =&[-i\overline{x^{i-1}} + h_{i}(x),\overline{1}] &
=&[[\frac {\overline{\delta}}{\delta x},
\overline{x^i}],\overline{1}] & =& [\frac
{\overline{\delta}}{\delta x},[\overline{x^i},\overline{1}]] & +&
[[\frac {\overline{\delta}}{\delta
x},\overline{1}],\overline{x^i}] & =&0,\end{array}$$

we obtain
$$
\begin{array}{llll}& c(x)=0, & b_{i,j}=0, & 1 \leq i,j \leq n-2,\\[1mm]
d_{i}(x)=0, &  g_{i}(x)=0, & h_{i}(x)=0, & 1 \leq i \leq
n-2.\end{array}$$
\end{proof}

\subsection{Classification of generalized Filiform Fock type Leibniz algebras}

In this subsection  we are focused in classifying of the class of
(infinite-dimensional) Leibniz algebras $L$ such that their corresponding Lie algebras are
finite direct sums of filiform Lie algebras $n_{n_1,1}\oplus
n_{n_2,1} \oplus \dots \oplus n_{n_s,1}$ and  that their actions on
$I $ are induced by Fock representations.

Since each algebra  $n_{n_i,1}$ has a standard basis $\{x_{i,1}, x_{i,2}, \dots, x_{i,n}\}$ we put \begin{equation}\label{base1}
  \frac {\overline{\delta}}{\delta
x_i} = x_{i,1},\qquad \overline{x_i^{n-j}}  = (n_i-j)! x_{i,j},
\quad 2 \leq j \leq n_i.
\end{equation}

For the algebra $n_{n_1,1}\oplus n_{n_2,1} \oplus \dots \oplus
n_{n_s,1}$ the {\it Fock module} on $n_{n_1,1}\oplus n_{n_2,1}
\oplus \dots \oplus n_{n_s,1}$ is the linear space
${\mathbb{F}}[x_1, x_2, \dots, x_s]$  with the action induced by
$$\begin{array}{lcl}(p(x_1, x_2, \dots, x_s),\overline{1_i}) & \mapsto &
p(x_1, x_2, \dots, x_s), \quad 1 \leq i \leq s,\\[1mm]
( p(x_1, x_2, \dots, x_s),\overline{x_i^j}) &\mapsto & x_i^j
p(x_1, x_2, \dots, x_s),\\[1mm]
(p(x_1, x_2, \dots, x_s),\frac {\overline{\delta}}{\delta x_i}) &
\mapsto & \frac {\delta(p(x_1, x_2, \dots, x_s))}{\delta
x_i},\end{array}$$ for any $p(x_1, x_2, \dots, x_s) \in
\mathbb{F}[x_1, x_2, \dots, x_s].$

We denote
$$\begin{array}{lll}
[\overline{x_i^j},\overline{1_k}]=a_{i,k}^j(x_1, x_2, \dots, x_s),
& 1 \leq i,k \leq s, & 1 \leq j \leq n_i-2,\\[1mm] [\frac
{\overline{\delta}}{\delta x_i},\overline{1_j}]=b_{i,j}(x_1, x_2,
\dots, x_s), & 1 \leq i, j \leq
s,\\[1mm]
[\overline{1_i},\overline{1_j}]=c_{i,j}(x_1, x_2, \dots, x_s), & 1
\leq i, j \leq s.\end{array}$$

Taking the change of basis
$$\begin{array}{ll}
\overline{x_i^j}^{\prime} = \overline{x_i^j} - a_{i,i}^k(x_1, x_2, \dots, x_s),& 1 \leq i \leq s,
\ 1 \leq j \leq n_i-2,\\[3mm]
\frac {\overline{\delta}}{\delta x_i}^{\prime} = \frac {\overline{\delta}}{\delta x_i} -
 b_{i,i}(x_1, x_2, \dots, x_s),&\\[3mm]
\overline{1_i}^{\prime} = \overline{1_i} - c_{i,i}(x_1, x_2, \dots, x_s),&
\end{array}$$
we derive
$$[\overline{x_i^j},\overline{1_i}]=0, \quad [\frac {\overline{\delta}}{\delta x_i},\overline{1_i}]=0, \quad
[\overline{1_i},\overline{1_i}]=0, \quad 1 \leq i \leq s, \ 1 \leq
j \leq n_i-2.$$

Let us introduce notations:

$$\begin{array}{llll}
[\overline{x_i^j},\overline{x_k^t}]=d_{i,k}^{j,t}(x_1, x_2, \dots,
x_s), & 1 \leq i, k \leq s, & 1 \leq j \leq n_i-2, & 1 \leq t \leq
n_k-2,\\[2mm] [\frac {\overline{\delta}}{\delta x_i},\frac{\overline{\delta}}{\delta x_j}]=
e_{i,j}(x_1, x_2, \dots, x_s), & 1 \leq i,j \leq s,\\[2mm]
[\overline{x_i^j}, \frac {\overline{\delta}}{\delta x_i}]=
i\overline{x_i^{j-1}}+ f^j_{i,i}(x_1, x_2, \dots, x_s), & 1 \leq i
\leq s, & 1 \leq j \leq n_i-2,\\[2mm]
[\overline{x_i^j}, \frac {\overline{\delta}}{\delta x_k}]=
f^j_{i,k}(x_1, x_2, \dots, x_s), & 1 \leq i, k \leq s, & 1 \leq j
\leq n_i-2, & i \neq k,\\[2mm] [\frac {\overline{\delta}}{\delta
x_i},\overline{x_i^j}]= - i\overline{x_i^{j-1}} +  g^j_{i,i}(x_1,
x_2, \dots, x_s), & 1 \leq i \leq s, & 1 \leq j \leq n_i-2,\\[2mm]
[\frac {\overline{\delta}}{\delta x_k},\overline{x_i^j}]=
g^j_{k,i}(x_1, x_2, \dots, x_s), & 1 \leq i \leq s, & 1 \leq j
\leq n_i-2, & i \neq k,\\[2mm]
[\overline{1_k}, \overline{x_i^j}]= h_{k,i}^j(x_1, x_2, \dots,
x_s), & 1 \leq i,k \leq s, & 1 \leq j \leq n_i-2,\\[2mm]
[\overline{1_i}, \frac {\overline{\delta}}{\delta x_j}]=
q_{i,j}(x_1, x_2, \dots, x_s), & 1 \leq i,k \leq s.
\end{array}$$

Consider the Leibniz identity
$$[[\overline{x_i^j},\overline{1_k}],\overline{1_i}] =
[\overline{x_i^j},[\overline{1_k},\overline{1_i}]] +
[[\overline{x_i^j},\overline{1_i}],\overline{1_k}] =0.$$

On the other hand
$$[[\overline{x_i^j},\overline{1_k}],\overline{1_i}] =
[a_{i,k}^j(x_1, x_2, \dots, x_s),\overline{1_i}] = a_{i,k}^j(x_1,
x_2, \dots, x_s)$$ which implies
$$a_{i,k}^j(x_1, x_2, \dots, x_s)=0, \quad 1 \leq i,k \leq s, \ 1 \leq j \leq n_i-2.$$

Similarly from the Leibniz identities
$$\begin{array}{l}
b_{i,j}(x_1, x_2, \dots, x_s)=[[\frac {\overline{\delta}}{\delta x_i},\overline{1_j}],\overline{1_i}] = [\frac
{\overline{\delta}}{\delta x_i},[\overline{1_j},\overline{1_i}]] +
[[\frac {\overline{\delta}}{\delta
x_i},\overline{1_i}],\overline{1_j}] =0,\\[3mm]
c_{i,j}(x_1, x_2, \dots, x_s)=[[\overline{1_i},\overline{1_j}],\overline{1_i}] =
 [\overline{1_i},[\overline{1_j},\overline{1_i}]] +
[[\overline{1_i},\overline{1_i}],\overline{1_j}] =0,
\end{array}$$ we obtain
$$b_{i,j}(x_1, x_2, \dots, x_s)=0, \quad c_{i,j}(x_1, x_2, \dots,
 x_s)=0, \quad 1 \leq i,j \leq s.$$

In a similar way, from the Leibniz identities
$$\begin{array}{l}
d_{i,k}^{j,t}(x_1, x_2,
\dots, x_s)=[[\overline{x_i^j},\overline{x_k^t}],\overline{1_i}] =
[\overline{x_i^j},[\overline{x_k^t},\overline{1_i}]] +
[[\overline{x_i^j},\overline{1_i}],\overline{x_k^t}] =0,\\[3mm]
e_{i,j}(x_1, x_2, \dots, x_s)=[[\frac {\overline{\delta}}{\delta x_i},\frac{\overline{\delta}}{\delta x_j}],
\overline{1_i}] = [\frac {\overline{\delta}}{\delta
x_i},[\frac{\overline{\delta}}{\delta x_j},\overline{1_i}]] +
[[\frac {\overline{\delta}}{\delta
x_i},\overline{1_i}],\frac{\overline{\delta}}{\delta x_j}] =0,\\[3mm]
f^j_{i,i}(x_1, x_2, \dots, x_s)=[i\overline{x_i^{j-1}}+ f^j_{i,i}(x_1, x_2, \dots, x_s),\overline{1_i}]=
[[\overline{x_i^j}, \frac {\overline{\delta}}{\delta x_i}],
\overline{1_i}] = [\overline{x_i^j},[\frac
{\overline{\delta}}{\delta x_i},\overline{1_i}]] +
[[\overline{x_i^j},\overline{1_i}],\frac
{\overline{\delta}}{\delta x_i}] =0,\\[3mm]
f^j_{i,k}(x_1, x_2, \dots, x_s)=
[[\overline{x_i^j}, \frac {\overline{\delta}}{\delta x_k}],
\overline{1_i}] = [\overline{x_i^j},[\frac
{\overline{\delta}}{\delta x_k},\overline{1_i}]] +
[[\overline{x_i^j},\overline{1_i}],\frac
{\overline{\delta}}{\delta x_k}] =0,\\[3mm]
g^j_{i,i}(x_1,
x_2, \dots, x_s)= [- i\overline{x_i^{j-1}} +  g^j_{i,i}(x_1, x_2,
\dots, x_s), \overline{1_i}] = [[\frac {\overline{\delta}}{\delta
x_i},\overline{x_i^j}], \overline{1_i}] = [\frac
{\overline{\delta}}{\delta x_i},[\overline{x_i^j},\overline{1_i}]]
+ [[\frac {\overline{\delta}}{\delta
x_i},\overline{1_i}],\overline{x_i^j}] =0,\\[3mm]
g^j_{k,i}(x_1,
x_2, \dots, x_s)= [g^j_{k,i}(x_1, x_2, \dots, x_s),
\overline{1_i}] = [[\frac {\overline{\delta}}{\delta
x_k},\overline{x_i^j}], \overline{1_k}] = [\frac
{\overline{\delta}}{\delta x_k},[\overline{x_i^j},\overline{1_k}]]
+ [[\frac {\overline{\delta}}{\delta
x_k},\overline{1_k}],\overline{x_i^j}] =0,\\[3mm]
h_{k,i}^j(x_1, x_2, \dots, x_s)= [h_{k,i}^j(x_1, x_2, \dots, x_s), \overline{1_k}] =
[[\overline{1_k}, \overline{x_i^j}], \overline{1_k}] =
[\overline{1_k},[\overline{x_i^j},\overline{1_k}]] +
[[\overline{1_k},\overline{1_k}],\overline{x_i^j}] =0,\\[3mm]
q_{i,j}(x_1, x_2, \dots, x_s)= [q_{i,j}(x_1, x_2, \dots, x_s), \overline{1_i}] =
[[\overline{1_i}, \frac {\overline{\delta}}{\delta x_j}],
\overline{1_i}] = [\overline{1_i},[\frac
{\overline{\delta}}{\delta x_j},\overline{1_i}]] +
[[\overline{1_i},\overline{1_i}],\frac {\overline{\delta}}{\delta
x_j}] =0,
\end{array}$$

we derive
$$\begin{array}{llll}
d_{i,k}^{j,t}(x_1, x_2, \dots, x_s) & 1 \leq i,k \leq s,& 1 \leq j
\leq n_i-2, & 1 \leq t \leq
n_k-2,  \\[1mm]
e_{i,j}(x_1, x_2, \dots, x_s)=0, & 1 \leq i,j \leq s,& \\[1mm]
f^j_{i,k}(x_1, x_2, \dots, x_s)=0, & 1 \leq i, k \leq s, & 1 \leq
j \leq n_i-2,\\[1mm]
g^j_{k,i}(x_1, x_2, \dots, x_s)=0, & 1 \leq i \leq s, & 1 \leq j
\leq n_i-2,\\[1mm]
h_{k,i}^j(x_1, x_2, \dots, x_s)=0, & 1 \leq i,k \leq s, & 1 \leq j
\leq n_i-2,\\[1mm]
q_{i,j}(x_1, x_2, \dots, x_s)=0, & 1 \leq i,k \leq s. &
\end{array}$$

Therefore, we have proved
\begin{teo} The above Leibniz algebra denoted by $FR(n_{n_1,1}\oplus n_{n_2,1} \oplus \dots \oplus n_{n_s,1})$
admits a basis
$$
 \{\overline{1_i}, \overline{x}_i^j,\frac {\overline{\delta}}{\delta
x_i}, \  x_1^{t_1}x_2^{t_2}\dots x_k^{t_k} \ | \ t_i\in
\mathbb{N}\cup \{0\}, \ 1\leq i \leq s, 1 \leq j \leq n_i\}
$$
in such  that the multiplication table in this basis has the
form:
$$\begin{array}{l} [\overline{x_i},\frac {\overline{\delta}}{\delta x_i}]=\overline{1},
\hspace{0.8cm} [\frac {\overline{\delta}}{\delta
x_i},\overline{x_i}]=-\overline{1},  \hspace{0.8cm} 1 \leq i \leq
k, \\[1mm]
[x_1^{t_1}x_2^{t_2}\dots x_k^{t_k},\overline{1}] =
x_1^{t_1}x_2^{t_2}\dots x_k^{t_k},\\[1mm]
[x_1^{t_1}x_2^{t_2}\dots x_k^{t_k},\overline{x}_i^j] =
x_1^{t_1}\dots x_{i-1}^{t_{i-1}}x_i^{t_i+j} x_{i+1}^{t_{i+1}}\dots
x_k^{t_k},\\[1mm] [x_1^{t_1}x_2^{t_2}\dots x_k^{t_k},\frac
{\overline{\delta}}{\delta x_i}] = t_i x_1^{t_1}\dots
x_{i-1}^{t_{i-1}} x_i^{t_i-1}x_{i+1}^{t_{i+1}}\dots
x_k^{t_k}.\end{array}$$ where the omitted products are equal to
zero.
\end{teo}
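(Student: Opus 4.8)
The plan is to reproduce, coordinate by coordinate over the $s$ filiform summands, the normalization argument from the proof of Theorem~\ref{1}. Put $G:=n_{n_1,1}\oplus\dots\oplus n_{n_s,1}$. First I would fix the basis: since $I$ carries the Fock module structure over $G$, the space $I$ has as a basis the monomials $x_1^{t_1}\dots x_s^{t_s}$, while $G=L/I$ has as a basis the elements $\overline{1_i}$, $\overline{x_i^j}$ $(1\le j\le n_i-2)$ and $\frac{\overline{\delta}}{\delta x_i}$ arising from renaming the standard bases of the summands as in Theorem~\ref{thm2.8}; lifting the latter to $L$ produces the asserted basis of $L$.

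Next I would separate the products that are forced a priori from those still to be normalized. The forced blocks are: (a) $[L,I]=0$, so any bracket with a monomial in the right-hand slot vanishes; (b) the action $I\times G\to I$ is the Fock action, which gives $[x_1^{t_1}\dots x_s^{t_s},\overline{1_i}]=x_1^{t_1}\dots x_s^{t_s}$, $[x_1^{t_1}\dots x_s^{t_s},\overline{x_i^j}]=x_1^{t_1}\dots x_i^{t_i+j}\dots x_s^{t_s}$ and $[x_1^{t_1}\dots x_s^{t_s},\frac{\overline{\delta}}{\delta x_i}]=t_i\,x_1^{t_1}\dots x_i^{t_i-1}\dots x_s^{t_s}$; (c) for any two basis elements of $G$ their bracket computed inside $L$ coincides, modulo $I$, with their Lie bracket in $G$, which vanishes except on the pairs $\{\overline{x_i^j},\frac{\overline{\delta}}{\delta x_i}\}$ lying in a common summand, where it equals $\pm\,j\,\overline{x_i^{j-1}}$ (with the convention $\overline{x_i^0}=\overline{1_i}$). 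Accordingly I would record the \emph{defect} of each remaining product as an element of $I=\mathbb{F}[x_1,\dots,x_s]$: say $[\overline{x_i^j},\overline{1_k}]=a^j_{i,k}(x)$, $[\frac{\overline{\delta}}{\delta x_i},\overline{1_j}]=b_{i,j}(x)$, $[\overline{1_i},\overline{1_j}]=c_{i,j}(x)$, and analogously $d,e,f,g,h,q$ for the brackets among the $\overline{x_i^j}$, the $\frac{\overline{\delta}}{\delta x_i}$ and the $\overline{1_i}$.

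Then I would perform the change of basis $\overline{x_i^j}\mapsto\overline{x_i^j}-a^j_{i,i}(x)$, $\frac{\overline{\delta}}{\delta x_i}\mapsto\frac{\overline{\delta}}{\delta x_i}-b_{i,i}(x)$, $\overline{1_i}\mapsto\overline{1_i}-c_{i,i}(x)$, which kills the three ``diagonal'' defect families and disturbs nothing already obtained, since subtracting an element of $I$ alters neither the Fock action from the right (elements of $I$ act as $0$ from the right) nor the brackets modulo $I$. After that the core of the proof is the same Leibniz-identity sweep as in Theorem~\ref{1}: using $[p,\overline{1_i}]=p$ for every $p\in I$, for a basis element $u$ of $G$ with defect $n(x)=[u,\overline{1_k}]\in I$ one expands
\[ \big[[u,\overline{1_k}],\overline{1_i}\big]=\big[[u,\overline{1_i}],\overline{1_k}\big]+\big[u,[\overline{1_k},\overline{1_i}]\big], \]
whose left side equals $n(x)$, whose first right-hand term is $0$ by the diagonal normalization, and whose second is $0$ because $[\overline{1_k},\overline{1_i}]\in I$ and $[L,I]=0$. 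Applying this identity first to annihilate the defects $a^j_{i,k}$, $b_{i,j}$, $c_{i,j}$ of the brackets against the $\overline{1_i}$, and then --- now that these vanish --- to annihilate $d^{j,t}_{i,k}$, $e_{i,j}$, $f^j_{i,k}$, $g^j_{k,i}$, $h^j_{k,i}$, $q_{i,j}$, leaves precisely the relations $[\overline{x_i^j},\frac{\overline{\delta}}{\delta x_i}]=j\,\overline{x_i^{j-1}}$ and $[\frac{\overline{\delta}}{\delta x_i},\overline{x_i^j}]=-j\,\overline{x_i^{j-1}}$ together with the three families of Fock products from (b), which is the multiplication table of the statement.

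I expect the only genuine difficulty to be the bookkeeping: the changes of basis must be carried out in an order that does not revive a product already set to zero, and each Leibniz identity must be fed the summand index for which the relevant Fock-action relation ($[p,\overline{1_i}]=p$, $[p,\overline{x_i^j}]=x_i^jp$, $[p,\frac{\overline{\delta}}{\delta x_i}]=\partial p/\partial x_i$ for $p\in I$) actually holds. The individual identity computations are routine and completely parallel to the ones already displayed for a single summand, and it remains only to observe that the displayed table indeed defines a Leibniz algebra with the required properties --- which is clear, since it coincides with the algebra $(G,M)$ built from $G$ and its Fock module $M$ by the construction recalled in the Preliminaries.
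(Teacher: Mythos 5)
Your proposal is correct and follows essentially the same route as the paper: the paper likewise records the defects $a^j_{i,k}, b_{i,j}, c_{i,j}, d, e, f, g, h, q$ in $I=\mathbb{F}[x_1,\dots,x_s]$, removes the diagonal ones by the change of basis $\overline{x_i^j}\mapsto\overline{x_i^j}-a^j_{i,i}$, $\frac{\overline{\delta}}{\delta x_i}\mapsto\frac{\overline{\delta}}{\delta x_i}-b_{i,i}$, $\overline{1_i}\mapsto\overline{1_i}-c_{i,i}$, and then annihilates all remaining defects via the Leibniz identity bracketed against $\overline{1_i}$ together with $[p,\overline{1_i}]=p$ and $[L,I]=0$. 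No substantive difference.
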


\section{Leibniz algebras associated with minimal faithful representation of $n_{n,1}$}

In this section we are going to study  the Leibniz algebras $L$ such that $L/ I \cong n_{n,1}$ and the
$n_{n,1}$-module $I$ is the  minimal faithful representation. In this case we have that
 ${\rm dim} L = 2n$ and $\{ x_1, x_2, \dots, x_n, e_1, e_2, \dots, e_{n}\}$ is  a basis of $L.$ We also have
\begin{equation}\label{eq1}\left\{\begin{array}{ll}
[e_{i}, x_1] = e_{i-1}, & 2 \leq i \leq n-1,\\[1mm]
[e_{n},x_j] = e_{n+1-j}, & 2 \leq j \leq n.
\end{array}\right.\end{equation}

Further we should define the multiplications $[x_i, x_j]$ for $1
\leq i,j \leq n.$ We put
\begin{equation}\label{eq_table}[x_i,x_j]=\left\{\begin{array}{ll} x_{i+1} +
\sum\limits_{k=1}^{n} \alpha_{i,1}^k e_k, & j=1, \  2
\leq i \leq n-1, \\[1mm]
-x_{j+1} + \sum\limits_{k=1}^{n} \alpha_{1,j}^k e_k, & i=1, \
2 \leq j \leq n-1,\\[1mm] \sum\limits_{k=1}^{n} \alpha_{i,j}^k e_k, &
otherwise.\end{array}\right.\end{equation}

In the following Lemma we define the multiplications in the case
of $i=1$ or $j=1$ and $j=n.$

\begin{lemma}\label{lemma1} There exists a basis $\{ x_1, x_2, \dots, x_n, e_1, e_2, \dots,
e_{n}\}$ of $L$ such that
\begin{equation}\label{eq4.1}\left\{ \begin{array}{lll}
[x_1,x_1]= \alpha_{1} e_{n-1}+\alpha_{2} e_{n}, & [x_1,x_j]=-x_{j+1}, &   2 \leq  j \leq n-1,\\[1mm]
[x_2,x_1]=x_3+\alpha_{3} e_{n}, & [x_{i},x_{1}] = x_{i+1}-
\alpha_2
e_{n+2-i}, &  3 \leq i \leq n-1, \\[1mm]
[x_{1},x_{n}] = \alpha_4 e_1 + \alpha_2 e_{2}, &
[x_{n},x_{1}] = - \alpha_4e_1 - 2\alpha_2e_{2}, \\[1mm]
[x_{2},x_{n}] = \alpha_5 e_1 + \alpha_3e_{2}, & [x_{i},x_{n}] =0,
& 3 \leq i \leq n.
\end{array}\right.
\end{equation}

\end{lemma}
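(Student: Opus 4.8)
The strategy is to exploit the Leibniz identity together with the known action of $n_{n,1}$ on the module $I=\span{e_1,\dots,e_n}$ given in~(\ref{eq1}), using the fact that $[L,I]=0$, and to absorb as many structure constants $\alpha_{i,j}^k$ as possible by successive changes of basis among the $x_i$'s (the $e_i$'s are pinned down by the faithful-representation requirement, so only transformations of the form $x_i\mapsto x_i+(\text{module element})$, plus the natural rescalings compatible with the graded structure of $n_{n,1}$, are available). First I would write the general multiplication table~(\ref{eq_table}) and impose the Leibniz identity on triples of the form $[[x_i,x_j],x_1]$, $[[x_i,x_1],x_1]$, $[[x_1,x_j],x_n]$, etc. Because $[L,I]=0$, the bracket $[[x_i,x_j],x_k]$ only sees the \emph{Lie part} $x_{i+1}$ (resp. $-x_{j+1}$) of $[x_i,x_j]$ acting on $x_k$, which the action~(\ref{eq1}) converts into shifts of the $e$'s; this gives linear relations among the $\alpha_{i,j}^k$.

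The key steps, in order: (1) normalize $[x_1,x_j]=-x_{j+1}$ for $2\le j\le n-1$ by replacing $x_{j+1}$ with $x_{j+1}-\sum_k\alpha_{1,j}^k e_k$ inductively on $j$ — this also forces a compensating change in $[x_j,x_1]$, producing the $x_{i+1}-\alpha_2 e_{n+2-i}$ terms once $\alpha_2$ is identified below. (2) Apply the Leibniz identity to $[x_i,[x_1,x_1]]$, $[[x_i,x_1],x_1]$ and cyclic variants to relate $[x_1,x_1]$, $[x_2,x_1]$, $[x_i,x_1]$; since $[x_1,x_1]\in I$ and $[\cdot,x_1]$ shifts $e_k\mapsto e_{k-1}$ for $2\le k\le n-1$ and kills $e_1,e_n$, iterating $[\cdot,x_1]$ enough times annihilates $[x_1,x_1]$, which constrains it to lie in $\span{e_{n-1},e_n}$, giving the form $\alpha_1 e_{n-1}+\alpha_2 e_n$. (3) Use $[[x_i,x_1],x_n]$ versus $[x_i,[x_1,x_n]]+[[x_i,x_n],x_1]$ and the action $e_n\mapsto e_{n+1-j}$ to pin down $[x_1,x_n]$, $[x_n,x_1]$, $[x_2,x_n]$ in terms of $\alpha_2,\alpha_3$ and two new parameters $\alpha_4,\alpha_5$, and to show $[x_i,x_n]=0$ for $3\le i\le n$ (these brackets, being in $I$, must be killed by enough applications of $[\cdot,x_1]$, but the chain $e_n\to e_{n-1}\to\cdots$ forces them to vanish). (4) Finally, track how the normalization in step (1) propagates: the substitution eliminating $\alpha_{1,j}^k$ changes $[x_j,x_1]$ by the module element $[x_j, \sum_k\alpha_{1,j}^k e_k]=0$ wait — rather, it changes $[x_{j},x_1]$ indirectly through the redefinition of $x_{j+1}$, and one checks the residual term is exactly $-\alpha_2 e_{n+2-i}$ for $3\le i\le n-1$ and $\alpha_3 e_n$ for $i=2$, consistent with the stated table.

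The main obstacle will be step (4): bookkeeping the interaction between the change of basis that kills the off-diagonal module components of $[x_1,x_j]$ and the simultaneous normalization coming from the Leibniz identities on $[x_i,x_1]$ and on the $x_n$-column. Each redefinition $x_{j+1}\rightsquigarrow x_{j+1}-(\cdots)$ feeds back into \emph{all} brackets involving $x_{j+1}$, including $[x_{j+1},x_1]$, $[x_{j+1},x_n]$, and $[x_1,x_{j+1}]$ at the next inductive stage, so one must verify the process is consistent and actually terminates with precisely the displayed coefficients rather than leaving extra free parameters; a careful induction on the grading index, using that $[L,I]=0$ kills all the "quadratic in module" corrections, is what makes it go through. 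The identification of the single parameter $\alpha_2$ as the common coefficient appearing in $[x_1,x_1]$, in $[x_i,x_1]$, and in the $x_n$-brackets is the crucial consolidation that prevents a proliferation of constants.
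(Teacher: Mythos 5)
Your overall framework --- write down the general table \eqref{eq_table}, absorb structure constants by adding elements of $I$ to the $x_i$'s, then run Leibniz identities against the action \eqref{eq1} --- is exactly the paper's, and your steps (1) and (4) are sound in outline. But the justifications you give for the two decisive normalizations are non sequiturs. In step (2) you argue that since iterating $[\,\cdot\,,x_1]$ eventually annihilates $[x_1,x_1]$, that bracket must lie in $\langle e_{n-1},e_n\rangle$. The operator $[\,\cdot\,,x_1]$ is nilpotent on all of $I$, so \emph{every} element of $I$ is killed by enough iterations; this observation constrains nothing. The correct mechanism (the one the paper uses) is a change of basis in $x_1$ itself: replacing $x_1$ by $x_1-v$ with $v\in I$ changes $[x_1,x_1]$ by $-[v,x_1]$ (the cross term $[x_1,v]$ and the square $[v,v]$ vanish because $[L,I]=0$), and since the image of $[\,\cdot\,,x_1]$ on $I$ is precisely $\langle e_1,\dots,e_{n-2}\rangle$, one can cancel exactly those components; the $e_{n-1}$ and $e_n$ components survive as the parameters $\alpha_1,\alpha_2$. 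Your plan as written only modifies $x_{j+1}$ for $j\ge 2$, so it never normalizes $[x_1,x_1]$ at all.

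The same fallacy reappears in step (3): you claim $[x_i,x_n]=0$ for $3\le i\le n$ because these elements of $I$ ``must be killed by enough applications of $[\,\cdot\,,x_1]$.'' Again that is true of everything in $I$ and proves nothing; moreover the chain you invoke, $e_n\to e_{n-1}\to\cdots$, is not the action of $x_1$ (one has $[e_n,x_1]=0$; it is $x_2,\dots,x_n$ that send $e_n$ to $e_{n-1},\dots,e_1$). The paper instead obtains these vanishings from concrete identities: $0=[x_1,[x_n,x_j]]$ yields $[x_{j+1},x_n]=-\alpha_{1,n}^n e_{n+1-j}$, and then $0=[x_2,[x_1,x_n]]$ forces $\alpha_{1,n}^n=0$ while identifying the $e_2$-coefficient of $[x_2,x_n]$ with the $\alpha_3$ from $[x_2,x_1]$. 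Likewise the correction $-\alpha_2 e_{n+2-i}$ in $[x_i,x_1]$ is not produced by your step-(1) substitution but by the term $[[x_1,x_1],x_i]=\alpha_2 e_{n+1-i}$ in the recursion $[[x_1,x_i],x_1]=[x_1,[x_i,x_1]]+[[x_1,x_1],x_i]$. So the skeleton is right, but at the two places where you replace a computation by a nilpotency heuristic --- which is precisely where the content of the lemma lives --- the argument as stated would fail.
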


\begin{proof}

In the multiplication \eqref{eq_table} taking the transformation
of basis
$$\begin{array}{ll}
x_1' = x_1 - \sum\limits_{k=1}^{n-2}\alpha_{1,1}^{k}e_{k+1} -
(\alpha_{2,1}^{n-1}+\alpha_{1,2}^{n-1})e_{n},& x_2' = x_2 -
\sum\limits_{k=1}^{n-2}(\alpha_{2,1}^{k}+\alpha_{1,2}^{k})e_{k+1},\\[3mm]
x_j' = x_j - \sum\limits_{k=1}^{n}\alpha_{1,j-1}^{k}e_{k} +(\alpha_{2,1}^{n-1}+\alpha_{1,2}^{n-1})e_{n+2-j},
& 3 \leq j \leq n,
\end{array}$$ we obtain
%$$\begin{cases}[x_1,x_1]= \alpha_{1,1}^{n-1} e_{n-1}+\alpha_{1,1}^{n} e_{n},
%& [x_1,x_j]=-x_{j+1}, \quad 2 \leq  j \leq n-1,\\[1mm]
%[x_2,x_2]=\sum\limits_{k=1}^{n-2}\alpha_{2,2}^{k} e_{k}+
%\alpha_{2,2}^{n} e_{n}, & [x_2,x_1]=x_3+\alpha_{2,1}^{n}
%e_{n}.\end{cases}$$
$[x_1,x_1]= \alpha_{1,1}^{n-1} e_{n-1}+\alpha_{1,1}^{n} e_{n},
\quad [x_2,x_1]=x_3+\alpha_{2,1}^{n} e_{n}, \quad
[x_1,x_j]=-x_{j+1}, \quad 2 \leq  j \leq n-1.$

Using the Leibniz identity we derive
$$\begin{array}{lll}
[x_{3},x_{1}] &=& -[[x_1,x_2],
x_{1}] = -[x_1,[x_2, x_{1}]] - [[x_1,x_1], x_{2}] = \\[1mm]
&=&[-x_1,
x_3+\alpha_{2,1}^{n-1} e_{n-1}+\alpha_{2,1}^{n} e_{n}] -
[\alpha_{1,1}^{n-1} e_{n-1}+\alpha_{1,1}^{n} e_{n}, x_2] =x_4 -
\alpha_{1,1}^{n} e_{n-1}.\\[1mm]
\end{array}$$

From the Leibniz identity, $[[x_1, x_{i}], x_1] = [x_1,[x_i,
x_{1}]] + [[x_1,x_1], x_{i}] $ recurrently we obtain
$$\begin{cases}[x_{i},x_{1}] = x_{i+1}-
\alpha_{1,1}^{n} e_{n+2-i}, & 3 \leq i \leq n-1, \\[1mm]
[x_{n},x_{1}] =  -\alpha_{1,1}^{n} e_{2} -
\sum\limits_{k=1}^{n}\alpha_{1,n}^{k}e_k.\end{cases}$$

On the other hand, from
$$0= [x_1,[x_n, x_{1}]] = [[x_1,x_n], x_{1}] - [[x_1,x_1], x_{n}]
= \sum\limits_{k=2}^{n-1}\alpha_{1,n}^{k}e_{k-1}-
\alpha_{1,1}^ne_1,$$ we get
$$\alpha_{1,n}^{2} = \alpha_{1,1}^{n}, \qquad
\alpha_{1,n}^k = 0, \quad 3 \leq k \leq n-1.$$

Now, we consider the Leibniz identity
$$\begin{array}{lll}
0&=& [x_1,[x_n, x_{j}]] = [[x_1,x_n], x_{j}] - [[x_1,x_j], x_{n}]
=\\[1mm]
&=&[\alpha_{1,n}^{1}e_1 + \alpha_{1,1}^{n} e_{2}  +
\alpha_{1,n}^{n}e_n, x_j] + [x_{j+1},x_n] =
\alpha_{1,n}^{n}e_{n+1-j} + [x_{j+1},x_n].
\end{array}$$

Hence, we have
$$ [x_{j+1},x_n] = - \alpha_{1,n}^{n}e_{n+1-j}, \quad 2 \leq j \leq n-1.$$

On the other hand, from the equalities
$$\begin{array}{lll}
0&=& [x_2,[x_1, x_{n}]] = [[x_2, x_1], x_{n}] - [[x_2,x_n], x_{1}]
= \\[1mm]
&=&[x_3 + \alpha_{2,1}^{n}e_n, x_n] -
[\sum\limits_{k=1}^{n-1}\alpha_{2,n}^{k}e_k,x_1] =
-\alpha_{1,n}^{n}e_{n-1} + \alpha_{2,1}^{n}e_1 -
\sum\limits_{k=1}^{n-2}\alpha_{2,n}^{k+1}e_k,
\end{array}$$
we obtain
$$\alpha_{2,1}^{n} = \alpha_{2,n}^{2}, \quad \alpha_{1,n}^{n}=0, \quad  \alpha_{2,n}^{k} =0, \quad 3 \leq k \leq n-1.$$

\end{proof}

Put
$$\begin{array}{ll}
Q_{0,1}=1, \qquad \quad   Q_{0,k}=\frac 1 2, \quad k \geq 2, & Q_{1,k}=\frac {k+1} 2,\\[3mm]
Q_{m,k}=\displaystyle\frac {k(k+1) \dots (k+m-2)(k+2m-1)} {2 (m!)}, & m \geq 2.
\end{array}$$

It is not difficult to check that
\begin{equation}\label{Q_m}Q_{m,k} = Q_{m,k-1} +
Q_{m-1,k}.\end{equation}

Now we will define the products $[x_i,x_j]$ for $i+ j \leq n+1.$
\begin{lemma}\label{lemma2} We have
\begin{equation}\label{eq4.2}
\small\left\{\begin{array}{lll}
[x_2,x_2]&=\sum\limits_{k=1}^{n-2}\beta_{k}e_k, \\[2mm]
[x_{i+1},x_i]&=\sum\limits_{k=1}^{n-1}\gamma_{i,k}e_k, & 2 \leq
i \leq \lfloor\frac{n} 2\rfloor, \\[1mm]
\\[1mm]
 [x_{i}, x_{i+j}] &=
\sum\limits_{s=0}^{\lfloor\frac {j+1}2\rfloor}
(-1)^{s}Q_{s,j+2-2s}\sum\limits_{k=1}^{n-2-j+2s}\gamma_{i+s-1,j+1-2s+k}e_k,
& 0 \leq j \leq n -5, \quad 3 \leq i \leq \lfloor\frac {n+1-j}
2\rfloor,\\[1mm]
[x_{2}, x_{j}] &= - (j-2)\alpha_3e_{n+2-j} +
\sum\limits_{k=1}^{n-j}\beta_{j-2+k}e_k+ \\[1mm] &+ \sum\limits_{s=2}^{\lfloor\frac
{j+1}2\rfloor}
(-1)^{s+1}Q_{s-1,j+2-2s}\sum\limits_{k=1}^{n-2-j+2s}\gamma_{s,j+1-2s+k}e_k,
 & 3 \leq j \leq n-1, \end{array}\right.\end{equation}
where $\lfloor a\rfloor$ is the integer part of $a.$
\end{lemma}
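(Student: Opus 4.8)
The plan is to determine the products $[x_i,x_j]$ with $i+j\le n+1$ by a double induction, using the Leibniz identity to propagate known products outward from the "seed" products $[x_2,x_2]$ and $[x_{i+1},x_i]$, whose coefficients $\beta_k,\gamma_{i,k}$ are simply introduced as free parameters. First I would note that all products $[x_i,x_j]$ lie in $I=\span{e_1,\dots,e_n}$ whenever $i\ge 2, j\ge 2$ (by the multiplication table \eqref{eq_table}), and that the right annihilator property $[L,I]=0$ together with \eqref{eq1} means that bracketing on the right by $x_1$ simply shifts the $e$-indices down by one: $[e_k,x_1]=e_{k-1}$. Hence for any element $u=\sum_k\mu_k e_k\in I$ we have $[u,x_1]=\sum_k\mu_k e_{k-1}$, and this is the only mechanism by which new relations among the $\mu_k$ will be forced.

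The key identity to exploit is the Leibniz relation in the form
\[
[x_i,[x_j,x_1]] = [[x_i,x_j],x_1] - [[x_i,x_1],x_j].
\]
Using Lemma \ref{lemma1} (which handles the boundary cases $[x_j,x_1]=x_{j+1}+\dots$ and the products involving $x_n$), the left-hand side becomes $[x_i,x_{j+1}]$ up to lower-order correction terms, the first term on the right is the down-shift of $[x_i,x_j]$, and the second term is $[x_{i+1},x_j]$ up to corrections. Rearranging, this expresses $[x_i,x_{j+1}]$ in terms of $[x_i,x_j]$, its shift, and $[x_{i+1},x_j]$. Starting from the antisymmetric seeds $[x_i,x_{i+1}]=-[x_{i+1},x_i]$ and iterating $j\mapsto j+1$, one generates $[x_i,x_{i+j}]$ for all admissible $i,j$. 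The claimed closed form
\[
[x_i,x_{i+j}] = \sum_{s=0}^{\lfloor(j+1)/2\rfloor}(-1)^s Q_{s,j+2-2s}\sum_{k=1}^{n-2-j+2s}\gamma_{i+s-1,\,j+1-2s+k}\,e_k
\]
is then verified by induction on $j$: plugging the formula for $[x_i,x_{i+j}]$ (with its index shifted) and for $[x_{i+1},x_i+1+(j-1)]$ into the recursion, the coefficient of each $\gamma_{i+s-1,\cdot}e_k$ collects two contributions whose sum is governed precisely by the Pascal-type recurrence \eqref{Q_m}, $Q_{m,k}=Q_{m,k-1}+Q_{m-1,k}$. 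This is the computational heart of the argument, and the main obstacle: one must track carefully how the upper limit $n-2-j+2s$ of the inner sum, the shift in the $\gamma$ second index, and the alternating sign interact so that the recursion reproduces exactly the $Q_{s,j+2-2s}$ coefficients rather than something off by one in an index.

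The row $i=2$ requires separate treatment because the seed $[x_2,x_2]=\sum\beta_k e_k$ is not of the $\gamma$ form and because $[x_2,x_1]=x_3+\alpha_3 e_n$ and $[x_1,x_j]=-x_{j+1}$ carry the extra $\alpha_3$ term from Lemma \ref{lemma1}; running the same recursion downward from $[x_2,x_2]$ and $[x_2,x_3]$ produces the stated formula for $[x_2,x_j]$, where the $-(j-2)\alpha_3 e_{n+2-j}$ term accumulates linearly from the repeated appearance of the $\alpha_3 e_n$ correction under successive down-shifts, the $\sum\beta_{j-2+k}e_k$ term is the shifted image of $[x_2,x_2]$, and the double sum with $Q_{s-1,\cdot}$ coefficients comes from the $\gamma$-contributions entering through $[x_3,x_{j-1}]$ etc. Throughout, I would keep invoking that any product whose output would have an $e$-index exceeding $n$ must vanish (since $[e_n,x_1]=e_{n+1-j}$ only for $j\le n$, and there is no $e_{n+1}$), which is what truncates the inner sums at their stated upper limits and what, together with the already-established relations of Lemma \ref{lemma1}, guarantees consistency of the whole system. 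Finally I would remark that no further constraints on $\beta_k,\gamma_{i,k}$ arise at this stage — those come from Leibniz identities with $i+j>n+1$, to be treated subsequently — so the lemma is exactly the assertion that the recursion closes in the displayed form.
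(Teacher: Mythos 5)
Your overall strategy coincides with the paper's: extract recursion relations from the Leibniz identity with $x_1$ in the third slot, take $[x_2,x_2]$ and $[x_{i+1},x_i]$ as free seeds, and verify the closed form by induction on $j$ using the Pascal-type recurrence $Q_{m,k}=Q_{m,k-1}+Q_{m-1,k}$; the treatment of the $i=2$ row and the accumulation of the $\alpha_3$ correction is also as in the paper. However, there is one genuine error at the point where you start the induction. You propose to launch the recursion from the ``antisymmetric seeds'' $[x_i,x_{i+1}]=-[x_{i+1},x_i]$. This antisymmetry is false here: these products lie in $I$, where no antisymmetry is available, and indeed the lemma's own formula gives $[x_i,x_{i+1}]+[x_{i+1},x_i]=\frac12\sum_k\gamma_{i-1,k+2}e_k\neq 0$ in general. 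What actually closes the system is the \emph{symmetry} relation $[x_{i+1},x_j]=[x_{j+1},x_i]$, which comes from the identity $[x_1,[x_i,x_j]]=0$ (valid because $[x_i,x_j]\in I$ and $[L,I]=0$) --- a second family of Leibniz identities that you do not invoke. Specializing it to $j=i-1$ gives $[x_i,x_i]=[x_{i+1},x_{i-1}]$, which combined with your recursion yields $[x_i,x_i]=\frac12[[x_i,x_{i-1}],x_1]$; this factor $\frac12$ is precisely the source of $Q_{0,k}=\frac12$ for $k\ge 2$, so without the symmetry relation the coefficients in your induction would come out wrong (and, more basically, the single recursion $[x_i,x_{j+1}]+[x_{i+1},x_j]=[[x_i,x_j],x_1]$ relates two unknowns at each step and does not determine anything on its own).

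Two smaller omissions: you never justify the stated upper limits of the seed sums. The vanishing of the $e_n$-components of all $[x_i,x_j]$ with $i,j\ge 2$ requires the separate Leibniz computation $\alpha_{i,j}^n=0$ (from $0=[x_i,[x_j,x_k]]$), and the upper limit $n-2$ in $[x_2,x_2]=\sum_{k=1}^{n-2}\beta_k e_k$ additionally uses the normalization $\beta_{n-1}=0$ achieved by the change of basis $x_2'=x_2-\beta_{n-1}e_n$. Neither follows from the general principle you cite about $e$-indices exceeding $n$. With the symmetry relation restored and these two preliminary steps added, your induction goes through exactly as in the paper.
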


\begin{proof}
Taking into account the notation \eqref{eq_table}, from the
Leibniz identity
$$\begin{array}{lll}
0&=& [x_i,[x_j, x_{k}]] = [[x_i,x_j], x_{k}] - [[x_i,x_k], x_{j}]
=\left [\sum\limits_{t=1}^n \alpha_{i,j}^t e_t, x_k\right ] +
\left [\sum\limits_{t=1}^n \alpha_{i,k}^t e_t, x_i\right ]
=\\[3mm]
&=&\alpha_{i,j}^ne_{n+2-k} - \alpha_{i,k}^ne_{n+2-j}, \quad 2 \leq
i,j,k(j \neq k) \leq n,
\end{array}$$ we get
$$\alpha_{i,j}^n =0, \quad 2 \leq i,j \leq n.$$

From the Leibniz identities for the triples of elements $[x_1,[x_i,
x_{j}]], [x_i,[x_1, x_{j}]],$ we derive the following relations
\begin{equation}\label{eq4.3}\left\{\begin{array}{ll}
[x_{i+1}, x_j] = [x_{j+1}, x_i], & 2 \leq i,j \leq n-1,  \\[1mm] [x_{2},
x_{j+1}] + [x_{3}, x_j] = - \alpha_3 e_{n+1-j} +
[[x_2,x_j], x_1], & 2 \leq j \leq n-1, \\[1mm] [x_{i}, x_{j+1}] +
[x_{i+1}, x_j] = [[x_i,x_j], x_1], & 3 \leq i \leq n-1 , \ 2 \leq
j \leq n-1, \\[1mm]
[x_n, x_{j+1}] = [[x_n, x_j],x_1] & 2 \leq j \leq n-1.
\end{array}\right.\end{equation}

From the first equality in \eqref{eq4.3} it is easy to see that it
is sufficient to define the multiplications $[x_i, x_j]$ for $j \geq
i-1.$  Put,
$$[x_2,x_2]=\sum\limits_{k=1}^{n-1}\beta_{k}e_k, \quad [x_{i+1},x_i]=\sum\limits_{k=1}^{n-1}\gamma_{i,k}e_k, \quad 2
\leq i \leq \lfloor\frac{n} 2\rfloor.$$

Applying, if necessary, the change of basis $x_2' = x_2- \beta_{n-1}e_n$ we can suppose that
$\beta_{n-1}=0$ and we will express other product by means of the structure
constants $\beta_i$ and $\gamma_{i,j}.$

First we will proof the third equation from \eqref{eq4.2} by
induction on $j.$ From the relations \eqref{eq4.3} we get $[x_{i},
x_{i}] = [x_{i+1}, x_{i-1}]$ and $[x_{i}, x_{i}] + [x_{i+1},
x_{i-1}] =[[x_{i}, x_{i-1}], x_1]$ which imply the assertion of
Lemma for $j=0,$ i.e.,
$$[x_{i}, x_{i}] = \frac 1 2 [[x_{i}, x_{i-1}], x_1]=\frac 1 2 \sum\limits_{k=1}^{n-2}\gamma_{i-1,k+1}e_k, \quad 3 \leq i \leq
\lfloor\frac{n+1} 2\rfloor.$$

Then from the relations (1) we obtain $[x_{i}, x_{i+1}] +
[x_{i+1}, x_{i}] = [[x_i, x_i], x_1].$ Using the assertion of
Lemma for $j=0,$ we get
$$[x_{i}, x_{i+1}] = \frac
1 2 [[[x_{i}, x_{i-1}], x_1],x_1] - [x_{i+1},x_i] = \frac 1 2
\sum\limits_{k=1}^{n-3}\gamma_{i-1,k+2}e_k -
\sum\limits_{k=1}^{n-1}\gamma_{i,k}e_k, \quad 3 \leq i \leq
\lfloor\frac{n} 2\rfloor.$$ Hence the assertion of the Lemma is true
for $j=1.$

Let us suppose that the assertion of the Lemma is true for indices less or equal to
$j$ and we will prove it for $j+1.$

From the relations \eqref{eq4.3} we obtain $[x_{i}, x_{i+j+1}] +
[x_{i+1}, x_{i+j}] = [[x_i, x_{i+j}], x_1].$ Using the assumption
of the induction we get
$$\begin{array}{l}
[x_{i}, x_{i+j+1}] = [[x_i, x_{i+j}],
x_1] - [x_{i+1}, x_{i+j}] =\\[2mm]
\qquad =\sum\limits_{s=0}^{\lfloor\frac
{j+1}2\rfloor}(-1)^{s}Q_{s,j+2-2s}\sum\limits_{k=1}^{n-3-j+2s}\gamma_{i+s-1,j+2-2s+k}e_k
 -\sum\limits_{s=0}^{\lfloor\frac {j}2\rfloor} (-1)^{s}Q_{s,j+1-2s}
\sum\limits_{k=1}^{n-1-j+2s}\gamma_{i+s,j-2s+k}e_k=\\[3mm]
\qquad =Q_{0,j+2}\sum\limits_{k=1}^{n-3-j}\gamma_{i-1,j+2+k}e_k + \sum\limits_{s=1}^{\lfloor\frac {j+1}2\rfloor}
(-1)^{s}Q_{s,j+2-2s}
\sum\limits_{k=1}^{n-3-j+2s}\gamma_{i+s-1,j+2-2s+k}e_k-\\[3mm]
\qquad -\sum\limits_{s=1}^{\lfloor\frac {j}2\rfloor+1}
(-1)^{s-1}Q_{s-1,j+3-2s}
\sum\limits_{k=1}^{n-3-j+2s}\gamma_{i+s-1,j+2-2s+k}e_k.
\end{array}$$

If $j$ is odd then $\lfloor\frac {j+1}2\rfloor = \lfloor\frac {j}2\rfloor
+1= \lfloor\frac {j+2}2\rfloor $ and using the equality \eqref{Q_m} we
get
$$\begin{array}{lll}
[x_{i}, x_{i+j+1}]& =&  Q_{0,j+2}\sum\limits_{k=1}^{n-3-j}\gamma_{i-1,j+2+k}e_k
+ \sum\limits_{s=1}^{\lfloor\frac {j+1}2\rfloor} (-1)^{s}Q_{s,j+3-2s}
\sum\limits_{k=1}^{n-3-j+2s}\gamma_{i+s-1,j+2-2s+k}e_k  =\\[3mm]
&=& \sum\limits_{s=0}^{\lfloor\frac {j+2}2\rfloor}
(-1)^{s}Q_{s,j+3-2s}
\sum\limits_{k=1}^{n-3-j+2s}\gamma_{i+s-1,j+2-2s+k}e_k.
\end{array}$$

If $j$ is even then $\lfloor\frac {j+1}2\rfloor = \lfloor\frac
{j}2\rfloor$ and we get
$$\begin{array}{lll}
[x_{i}, x_{i+j+1}] &=& Q_{0,j+2}\sum\limits_{k=1}^{n-3-j}\gamma_{i-1,j+2+k}e_k
+ \sum\limits_{s=1}^{\lfloor\frac {j}2\rfloor}
(-1)^{s}(Q_{s,j+3-2s})\sum\limits_{k=1}^{n-3-j+2s}\gamma_{i+s-1,j+2-2s+k}e_k
+\\[3mm]
&+& (-1)^{\lfloor\frac{j+2}2\rfloor}
\sum\limits_{k=1}^{n-1}\gamma_{i+\lfloor\frac {j}2 \rfloor,k}e_k =
\sum\limits_{s=0}^{\lfloor\frac {j+2}2\rfloor} (-1)^{s}Q_{s,j+3-2s}
\sum\limits_{k=1}^{n-3-j+2s}\gamma_{i+s-1,j+2-2s+k}e_k.
\end{array}$$

The products $[x_2, x_j]$ are also obtained by
induction on $j,$ using the equality  \eqref{eq4.3} and the
multiplication $[x_3, x_{j-1}].$
\end{proof}

In the following lemma we define the products $[x_i,x_j]$ for $i+j
\geq n+2.$

\begin{lemma}\label{lemma3} We have
\begin{equation}\label{eq4.4}\small\left\{\begin{array}{lll} [x_{i},x_{n+2-i}] &= (-1)^{i} \alpha_5 e_1 +
(-1)^{i}(n-5)\alpha_3 e_2+(-1)^{i+1}\beta_{n-2}e_1+\\[3mm] &+
\sum\limits_{s=2}^{i-2}
(-1)^{s+i}\sum\limits_{t=1}^sQ_{s-t,n+1-2s}\sum\limits_{k=1}^{2s-2}\gamma_{s,n+1-2s+k}e_k +\\[3mm]
&+ \sum\limits_{s=i-1}^{\lfloor\frac n 2\rfloor}(-1)^{s+i}\sum\limits_{t=1}^{i-2}Q_{s-t,n+1-2s}\sum\limits_{k=1}^{2s-2}\gamma_{s,n+1-2s+k}e_k, \\[3mm]
[x_{i},x_{n+3-i}] &= (-1)^{i+1} (i-3)(n-5)\alpha_3 e_1+ \\[3mm]
&+ \sum\limits_{s=2}^{i-3}
(-1)^{s+i+1}\sum\limits_{t=1}^s(i-2-t)Q_{s-t,n+1-2s}\sum\limits_{k=1}^{2s-3}\gamma_{s,n+2-2s+k}e_k+\\[3mm]
&+ \sum\limits_{s=i-2}^{\lfloor\frac n 2\rfloor}(-1)^{s+i+1}\sum\limits_{t=1}^{i-3}(i-2-t)Q_{s-t,n+1-2s}\sum\limits_{k=1}^{2s-3}\gamma_{s,n+2-2s+k}e_k,\\[3mm]
[x_{i},x_{n+p-i}] & = \sum\limits_{s=\lfloor\frac p 2\rfloor +1}^{i-p}
(-1)^{s+i+p}\sum\limits_{t=1}^s\Big(\begin{array}{c} i-2-t\\
p-2\end{array}\Big)
Q_{s-t,n+1-2s}\sum\limits_{k=1}^{2s-p}\gamma_{s,n+p-1-2s+k}e_k +\\[1mm]
&+ \sum\limits_{s=\max\{i-p+1; \lfloor\frac p 2\rfloor +1\}}^{\lfloor\frac n
2\rfloor}(-1)^{s+i+p}
\sum\limits_{t=1}^{i-p}\Big(\begin{array}{c} i-2-t\\
p-2\end{array}\Big)Q_{s-t,n+1-2s}\sum\limits_{k=1}^{2s-p}\gamma_{s,n+p-1-2s+k}e_k,\end{array}\right.\end{equation}
where $4 \leq p \leq n-1,$\  $p+1 \leq i \leq \lfloor\frac {n+p+1} 2\rfloor.$
\end{lemma}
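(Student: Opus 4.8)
The plan is to determine all products $[x_i,x_j]$ with $i+j\ge n+2$ from those already computed in Lemmas \ref{lemma1} and \ref{lemma2} by iterating the Leibniz-identity relations \eqref{eq4.3}. Writing the second and third relations of \eqref{eq4.3} in the form
$$[x_{a+1},x_b]=\big[[x_a,x_b],x_1\big]-[x_a,x_{b+1}]\qquad(3\le a\le n-1),$$
$$[x_3,x_b]=-\alpha_3e_{n+1-b}+\big[[x_2,x_b],x_1\big]-[x_2,x_{b+1}],$$
one sees that a product of weight $n+p$ is expressed through one product of the same weight but with a strictly smaller first index, and one product of weight $n+p-1$ to which right multiplication by $x_1$ has been applied; this last operation merely shifts the coefficient vector down one index and truncates it, since $[e_k,x_1]=e_{k-1}$ for $2\le k\le n-1$ and $[e_1,x_1]=[e_n,x_1]=0$. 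The recursion on the first index is anchored by the boundary values of Lemma \ref{lemma1}, namely $[x_i,x_n]=0$ for $3\le i\le n$ and $[x_2,x_n]=\alpha_5e_1+\alpha_3e_2$, while the diagonal $i+j=n+1$ is supplied by Lemma \ref{lemma2}. The range $p+1\le i\le\lfloor\frac{n+p+1}2\rfloor$ in the statement is exactly the range of first indices that are neither forced to vanish nor reducible to a smaller first index through the first relation $[x_{i+1},x_j]=[x_{j+1},x_i]$ of \eqref{eq4.3}.

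Accordingly I would argue by induction on $p$, singling out $p=2$ and $p=3$: it is precisely for these small $p$ that the boundary constants $\alpha_3,\alpha_5,\beta_{n-2}$ still reach the surviving range --- through $[x_2,x_n]$, through the $\beta_{n-2}e_1$ summand of $[x_2,x_{n-1}]$ in \eqref{eq4.2} (recall $\beta_{n-1}=0$), and through the correction $-\alpha_3e_{n+1-b}$, which is relevant only for $b=n-1$, i.e.\ for $p=2$. For $p\ge4$ these contributions have been pushed past the surviving index range and only the $\gamma_{s,k}$ remain. For fixed $p$ one takes the weight-$(n+p-1)$ formula from the outer induction (from Lemma \ref{lemma2} when $p=2$), feeds it into the recurrences above, and performs a secondary induction on $i$ from $i=p+1$ upward, the base case $[x_{p+1},x_{n-1}]=\big[[x_p,x_{n-1}],x_1\big]-[x_p,x_n]$ using $[x_p,x_n]=0$ (respectively $[x_2,x_n]$ when $p=2$).

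The substantial part is the combinatorial bookkeeping in this secondary induction: one must check that applying the index shift to the weight-$(n+p-1)$ expression and subtracting the weight-$(n+p)$ expression with first index $i-1$ reproduces the claimed expression with first index $i$. This reduces to three ingredients: (a) the Pascal relation $\binom{i-2-t}{p-2}=\binom{i-3-t}{p-2}+\binom{i-3-t}{p-3}$ governing the binomial factors that appear when the weight is raised by one; (b) the recursion \eqref{Q_m}, $Q_{m,k}=Q_{m,k-1}+Q_{m-1,k}$, for the $Q$-factors, used exactly as in the proof of Lemma \ref{lemma2}; and (c) matching the index ranges --- the lower limits $\lfloor\frac p2\rfloor+1$, the break at $s=i-p$ separating the regime where the inner $t$-sum runs to $s$ from the one where it is truncated at $i-p$, the upper limit $\lfloor\frac n2\rfloor$, and the bound $2s-p$ of the $k$-sum --- through the shift. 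I expect (c) to be the main obstacle: reconciling the two regimes across the break $s=i-p$ after each step forces the same kind of parity case analysis (on $p$, and on whether $i-p<\lfloor\frac n2\rfloor$) that appears in Lemma \ref{lemma2}, and only once the ranges are aligned do (a) and (b) collapse the coefficients to the stated closed forms. A final verification that the value of $[x_{p+1},x_{n-1}]$ produced from $[x_p,x_n]$ and $[x_p,x_{n-1}]$ agrees with the formula fixes the lower limits of the $s$-sums.
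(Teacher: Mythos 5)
Your proposal follows essentially the same route as the paper: a double induction, first on the weight $p$ (with $p=2$ and $p=3$ handled separately because the constants $\alpha_3,\alpha_5,\beta_{n-2}$ from Lemmas \ref{lemma1} and \ref{lemma2} still enter there) and then on the first index $i$, driven by the recurrences of \eqref{eq4.3} anchored at $[x_2,x_n]=\alpha_5e_1+\alpha_3e_2$ and $[x_i,x_n]=0$ for $i\ge3$, with the identity \eqref{Q_m} and the binomial/index bookkeeping doing the combinatorial work. The paper's own proof is no more detailed on the $p\ge4$ case than your outline, so this is an accurate reconstruction of the argument.
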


\begin{proof}
First we will find the products $[x_{i},x_{n+2-i}].$

According to Lemma \ref{lemma1} we have $[x_{2},x_n] = \alpha_5
e_1 + \alpha_3e_{2}$ and using Lemma \ref{lemma2} from relations
\eqref{eq4.3} we obtain
$$\begin{array}{lll}
[x_{3},x_{n-1}] &=& - \alpha_3e_{2}- [x_2,x_{n}] + [[x_2,x_{n-1}], x_1]
= -\alpha_5 e_1 - (n-5)\alpha_3 e_2 + \\[3mm]
&+&\sum\limits_{s=2}^{\lfloor\frac n 2\rfloor}
(-1)^{s+1}Q_{s-1,n+1-2s}\sum\limits_{k=1}^{2s-2}
\gamma_{s,n+1-2s+k}e_k.
\end{array}$$

Similarly, from the equality $[x_{i-1},x_{n+3-i}] +
[x_{i},x_{n+2-i}] = [[x_{i-1},x_{n+2-i}], x_1],$ for $3 \leq i
\leq k+1,$ by induction we  obtain
$$\begin{array}{l}
[x_{i},x_{n+2-i}] = (-1)^{i} \alpha_5 e_1 +
(-1)^{i}(n-5)\alpha_3 +(-1)^{i+1}\beta_{n-2}e_1 +\\[3mm]
\qquad+\sum\limits_{s=2}^{i-3}
(-1)^{s+i}\sum\limits_{t=1}^s Q_{s-t,n+1-2s}\sum\limits_{k=1}^{2s-2}\gamma_{s,n+1-2s+k}e_k
+ \sum\limits_{s=i-2}^{\lfloor\frac n
2\rfloor}(-1)^{s+i}\sum\limits_{t=1}^{i-3}Q_{s-t,n+1-2s}\sum\limits_{k=1}^{2s-2}\gamma_{s,n+1-2s+k}e_k+\\[3mm]
\qquad +\sum\limits_{s=0}^{\lfloor\frac {n+4-2i}2\rfloor}
(-1)^{s}Q_{s,n+5-2i-2s}\sum\limits_{k=1}^{2s+2i-6}\gamma_{i+s-2,n+5-2i-2s+k}e_k.
\end{array}$$

Taking $s'=s+i-2$ in the last sum, we obtain
$$\begin{array}{l}
\sum\limits_{s=0}^{\lfloor\frac {n+4-2i}2\rfloor}
(-1)^{s}Q_{s,n+5-2i-2s}\sum\limits_{k=1}^{2s+2i-6}\gamma_{i+s-2,n+5-2i-2s+k}e_k
= \\[3mm]
\qquad =\sum\limits_{s=i-2}^{\lfloor\frac {n}2\rfloor}
(-1)^{s+i}Q_{s-i+2,n+1-2s}\sum\limits_{k=1}^{2s-2}\gamma_{s,n+1-2s+k}e_k.
\end{array}$$

Placing this equality to above one we obtain the first equality of the lemma.

Now we will deduce the products $[x_{i},x_{n+3-i}].$

Using $[x_{3},x_n] = 0$ from the relation
$[x_{3},x_n] + [x_{4},x_{n-1}] = [[x_3,x_{n-1}], x_1]$ we get
$$[x_{4},x_{n-1}] =  - (n-5)\alpha_3 e_1
+  \sum\limits_{s=2}^{\lfloor\frac n 2\rfloor}
(-1)^{s+1}Q_{s-1,n+1-2s}\sum\limits_{k=1}^{2s-3}\gamma_{s,n+2-2s+k}e_k
$$

Using the products $[x_{i},x_{n+2-i}]$
 from the equality $[x_{i-1},x_{n+4-i}] +
[x_{i},x_{n+3-i}] = [[x_{i-1},x_{n+3-i}], x_1],$ for $4 \leq i
\leq k+2,$ by induction on $i$ similarly to the previous case we
obtain
$$\begin{array}{l}
[x_{i},x_{n+3-i}] = (-1)^{i+1} (i-3)(n-5)\alpha_3 e_1
+ \sum\limits_{s=2}^{i-3}
(-1)^{s+i+1}\sum\limits_{t=1}^s(i-2-t)Q_{s-t,n+1-2s}\sum\limits_{k=1}^{2s-3}\gamma_{s,n+2-2s+k}e_k+\\[3mm]
\qquad \quad+ \sum\limits_{s=i-2}^{\lfloor\frac n
2\rfloor}(-1)^{s+i+1}\sum\limits_{t=1}^{i-3}(i-2-t)Q_{s-t,n+1-2s}\sum\limits_{k=1}^{2s-3}\gamma_{s,n+2-2s+k}e_k.
\end{array}$$

The last equality from \eqref{eq4.4} follows by the induction on $p$ and $i$ (first by $p$, then by $i$.)
\end{proof}

Now we define some restrictions to the structure constants $\beta_i$ and $\gamma_{i,j}.$

From the equality \eqref{eq4.3} we obtain
$$[x_{\lfloor\frac n 2\rfloor+l}, x_{\lfloor\frac n 2\rfloor+l}]= \frac 1 2
[[x_{\lfloor\frac n 2\rfloor+l},x_{\lfloor\frac n 2\rfloor+l-1}],x_1], \quad 1 \leq l \leq
\lfloor\frac n 2\rfloor.
$$

\textbf{Let $n$ be even.} Then in the case of $l=1$  we get

$$[x_{\lfloor\frac n 2\rfloor+1}, x_{\lfloor\frac n 2\rfloor+1}]= \frac 1 2
[[x_{\lfloor\frac n 2\rfloor+1},x_{\lfloor\frac n 2\rfloor}],x_1]= \frac 1 2
\sum\limits_{k=1}^{n-2} \gamma_{\lfloor\frac n 2\rfloor,k+1}e_k
$$

On the other hand, from Lemma \ref{lemma3} we obtain
$$\begin{array}{l}
[x_{\lfloor\frac n 2\rfloor+1},x_{\lfloor\frac n 2\rfloor+1}] = (-1)^{\lfloor\frac n 2\rfloor+1} \alpha_5 e_1 +
(-1)^{\lfloor\frac n 2\rfloor+1}(n-5)\alpha_3 e_2+(-1)^{\lfloor\frac n 2\rfloor}\beta_{n-2}e_1+\\[2mm]
\quad +
\sum\limits_{s=2}^{\lfloor\frac n 2\rfloor-1} (-1)^{s+\lfloor\frac n 2\rfloor+1}\sum\limits_{t=1}^sQ_{s-t,n+1-2s}\sum\limits_{k=1}^{2s-2}\gamma_{s,n+1-2s+k}e_k
- \sum\limits_{t=1}^{\lfloor\frac n 2\rfloor-1}Q_{\lfloor\frac n 2\rfloor-t,1}\sum\limits_{k=1}^{n-2}\gamma_{\lfloor\frac n 2\rfloor,k+1}e_k.
\end{array}$$
%Therefore, we get
%$$ ( \alpha_5 - \beta_{n-2})e_1 +
%(n-5)\alpha_3 e_2+ \sum\limits_{s=2}^{\lfloor\frac n 2\rfloor-1}
%(-1)^{s}\sum\limits_{t=1}^sQ_{s-t,n+1-2s}\sum\limits_{k=1}^{2s-2}\gamma_{s,n+1-2s+k}e_k
%$$ $$+(-1)^{\lfloor\frac n 2\rfloor} \Big\{\frac
%1 2 + \sum\limits_{t=1}^{\lfloor\frac n 2\rfloor-1}Q_{[\frac n
%2]-t,1}\Big\}\sum\limits_{k=1}^{n-2}\gamma_{\lfloor\frac n 2\rfloor,k+1}e_k=0.
%$$
%
%
%$$ ( \alpha_5 - \beta_{n-2})e_1 +
%(n-5)\alpha_3 e_2+ $$$$\sum\limits_{k=1}^{n-2}
%\Big\{\sum\limits_{s=[\frac {k+3} 2]}^{\lfloor\frac n 2\rfloor}
%(-1)^{s}\gamma_{s,n+1-2s+k}
%\Big(\sum\limits_{t=1}^{s}Q_{s-t,n+1-2s}\Big)-(-1)^{[\frac n
%2]}\frac 1 2 \gamma_{\lfloor\frac n 2\rfloor,k+1} \Big\}e_k =0$$
Comparing the coefficients at the basis elements we obtain
the following restrictions:

\begin{equation}\label{eq8}\left\{\begin{array}{cc}\alpha_5 - \beta_{n-2}+
\sum\limits_{s=2}^{\lfloor\frac n 2\rfloor} (-1)^{s}\gamma_{s,n+2-2s}
\Big(\sum\limits_{t=1}^{s}Q_{s-t,n+1-2s}\Big)-(-1)^{\lfloor\frac n 2\rfloor}\frac 1 2 \gamma_{\lfloor\frac n 2\rfloor,2} =0,&\\[2mm]
(n-5)\alpha_3 + \sum\limits_{s=2}^{\lfloor\frac n 2\rfloor}
(-1)^{s}\gamma_{s,n+3-2s}
\Big(\sum\limits_{t=1}^{s}Q_{s-t,n+1-2s}\Big)-(-1)^{\lfloor\frac n 2\rfloor}\frac 1 2 \gamma_{\lfloor\frac n 2\rfloor,3} =0,&\\[2mm]
\sum\limits_{s=\lfloor\frac {k+3} 2\rfloor}^{\lfloor\frac n 2\rfloor}
(-1)^{s}\gamma_{s,n+1-2s+k}
\Big(\sum\limits_{t=1}^{s}Q_{s-t,n+1-2s}\Big)-(-1)^{\lfloor\frac n 2\rfloor}\frac 1 2 \gamma_{\lfloor\frac n 2\rfloor,k+1} =0, &3 \leq k \leq
n-2.\end{array}\right.
\end{equation}

If $l \geq 2,$ then we have
$$\begin{array}{l}
 [x_{\lfloor\frac n 2\rfloor+l},x_{\lfloor\frac n 2\rfloor+l}]
=\frac 1 2 [[x_{\lfloor\frac n 2\rfloor+l},x_{\lfloor\frac n 2\rfloor+l-1}],x_1]= \\[3mm]
\qquad = \frac 1 2\sum\limits_{s=l+1}^{\lfloor\frac n 2\rfloor-l+1}
(-1)^{s+\lfloor\frac n 2\rfloor+l-1}\sum\limits_{t=1}^s\Big(\begin{array}{c} \lfloor\frac n 2\rfloor+l-2-t\\
2l-3\end{array}\Big)
Q_{s-t,n+1-2s}\sum\limits_{k=1}^{2s-2l}\gamma_{s,n+2l-1-2s+k}e_k +\\[3mm]
\qquad +\frac 1 2 \sum\limits_{s=\max\{\lfloor\frac n 2\rfloor-l+2; l+1\}}^{\lfloor\frac n 2\rfloor}(-1)^{s+\lfloor\frac n 2\rfloor+l-1}
\sum\limits_{t=1}^{\lfloor\frac n 2\rfloor-l+1}\Big(\begin{array}{c} \lfloor\frac n 2\rfloor+l-2-t\\
2l-3\end{array}\Big)Q_{s-t,n+1-2s}\sum\limits_{k=1}^{2s-2l}\gamma_{s,n+2l-1-2s+k}e_k.
\end{array}$$

On the other hand, in the equality \eqref{eq4.4} for $p=2l$ and
$i=\lfloor\frac n 2\rfloor+l$ we deduce
$$\begin{array}{l}
 [x_{\lfloor\frac n 2\rfloor+l},x_{\lfloor\frac n 2\rfloor+l}] =
\sum\limits_{s=l+1}^{\lfloor\frac n 2\rfloor-l}
(-1)^{s+\lfloor\frac n 2\rfloor+l}\sum\limits_{t=1}^s\Big(\begin{array}{c} \lfloor\frac n 2\rfloor+l-2-t\\
2l-2\end{array}\Big)
Q_{s-t,n+1-2s}\sum\limits_{k=1}^{2s-2l}\gamma_{s,n+2l-1-2s+k}e_k +\\[3mm]
\qquad +\sum\limits_{s=\max\{\lfloor\frac n 2\rfloor-l+1; l+1\}}^{\lfloor\frac n 2\rfloor}(-1)^{s+\lfloor\frac n 2\rfloor+l}\sum\limits_{t=1}^{\lfloor\frac n 2\rfloor-l}
\Big(\begin{array}{c} \lfloor\frac n 2\rfloor+l-2-t\\
2l-2\end{array}\Big)Q_{s-t,n+1-2s}\sum\limits_{k=1}^{2s-2l}\gamma_{s,n+2l-1-2s+k}e_k.
\end{array}$$

%Comparing this equalities we obtain
%$$ \sum\limits_{s=l+1}^{\lfloor\frac n 2\rfloor-l}
%(-1)^{s}\sum\limits_{t=1}^s\lfloor\Big(\begin{array}{c} \lfloor\frac n 2\rfloor+l-2-t\\
%2l-2\end{array}\Big)  + \frac 1 2 \Big(\begin{array}{c} \lfloor\frac n 2\rfloor+l-2-t\\
%2l-3\end{array}\Big)\rfloorQ_{s-t,n+1-2s}\sum\limits_{k=1}^{2s-2l}\gamma_{s,n+2l-1-2s+k}e_k
%+
%$$
%$$
%\sum\limits_{s=\max\{\lfloor\frac n 2\rfloor-l+1; l+1\}}^{\lfloor\frac n 2\rfloor} \lfloor
%\frac 1 2
%Q_{s-\lfloor\frac n 2\rfloor+l-1, n+1-2s} + \sum\limits_{t=1}^{\lfloor\frac n 2\rfloor-l}\Big\{\Big(\begin{array}{c} \lfloor\frac n 2\rfloor+l-2-t\\
%2l-2\end{array}\Big) + \frac 1 2 \Big(\begin{array}{c} \lfloor\frac n 2\rfloor+l-2-t\\
%2l-3\end{array}\Big) \Big\}  Q_{s-t,n+1-2s} \rfloor \times $$$$
%(-1)^{s}\sum\limits_{k=1}^{2s-2l}\gamma_{s,n+2l-1-2s+k}e_k= 0.$$

%$$ \sum\limits_{k=1}^{n-2l}
%\Big\{\sum\limits_{s=\lfloor\frac {k+1} 2\rfloor+l}^{\lfloor\frac n 2\rfloor}
%(-1)^{s}\gamma_{s,n+2l-1-2s+k}\sum\limits_{t=1}^{\min\{s,\lfloor\frac n 2\rfloor-l\}}
% \lfloor\Big(\begin{array}{c} \lfloor\frac n 2\rfloor+l-2-t\\
%2l-2\end{array}\Big)  + \frac 1 2 \Big(\begin{array}{c} \lfloor\frac n 2\rfloor+l-2-t\\
%2l-3\end{array}\Big)\rfloorQ_{s-t,n+1-2s} +$$
%$$\sum\limits_{s=\max\{\lfloor\frac n 2\rfloor-l+1; \lfloor\frac {k+1} 2\rfloor+l\}}^{\lfloor\frac n 2\rfloor}
%\frac 1 2(-1)^{s}\gamma_{s,n+2l-1-2s+k}Q_{s-\lfloor\frac n 2\rfloor+l-1,
%n+1-2s}\Big\}e_k =0$$

Comparing the coefficients at the basis elements we get
\begin{equation}\label{eq9}\small\left\{\begin{array}{l}
\sum\limits_{s=\lfloor\frac {k+1} 2\rfloor+l}^{\lfloor\frac n 2\rfloor}
(-1)^{s}\gamma_{s,n+2l-1-2s+k}\sum\limits_{t=1}^{\min\{s,\lfloor\frac n 2\rfloor-l\}}
 \left [\Big(\begin{array}{c} \lfloor\frac n 2\rfloor+l-2-t\\
2l-2\end{array}\Big)  + \frac 1 2 \Big(\begin{array}{c} \lfloor\frac n 2\rfloor+l-2-t\\
2l-3\end{array}\Big)\right ] Q_{s-t,n+1-2s} +\\[3mm]
\sum\limits_{s=\max\{\lfloor\frac n 2\rfloor-l+1; \lfloor\frac {k+1} 2\rfloor+l\}}^{\lfloor\frac n 2\rfloor}
\frac 1 2(-1)^{s}\gamma_{s,n+2l-1-2s+k}Q_{s-\lfloor\frac n 2\rfloor+l-1,
n+1-2s}=0,\end{array}\right.\end{equation}
where $ 2 \leq l \leq
\lfloor\frac n 2\rfloor-1, \ 1 \leq k \leq n-2l.$

\

\textbf{Let $n$ be odd.} Then in the case of $l=2$ we get $
[x_{\lfloor\frac n 2\rfloor+2},x_{\lfloor\frac n 2\rfloor+2}]=\frac 1 2 [[x_{\lfloor\frac n 2\rfloor+2},x_{\lfloor\frac n 2\rfloor+1}],x_1]$  and using the first equality of
\eqref{eq4.4} we obtain $$[x_{\lfloor\frac n 2\rfloor+2},x_{\lfloor\frac n 2\rfloor+2}]=
\frac 1 2 (-1)^{\lfloor\frac n 2\rfloor} (n-5) \alpha_3e_1+
 \frac 1 2\sum\limits_{s=2}^{\lfloor\frac n 2\rfloor}(-1)^{s+\lfloor\frac n 2\rfloor}\sum\limits_{t=1}^sQ_{s-t,n+1-2s}
\sum\limits_{k=1}^{2s-3}\gamma_{s,n+2-2s+k}e_k.$$

On the other hand, from the second equality of \eqref{eq4.4} for $i=\lfloor\frac n 2\rfloor+2$ we get

$$\begin{array}{lll}
[x_{\lfloor\frac n 2\rfloor+2},x_{\lfloor\frac n 2\rfloor+2}]&=& (-1)^{\lfloor\frac n 2\rfloor+1} (\lfloor\frac n 2\rfloor-1)(n-5)\alpha_3 e_1
+\\[3mm]
&+& \sum\limits_{s=2}^{\lfloor\frac n 2\rfloor-1} (-1)^{s+\lfloor\frac n 2\rfloor+1}\sum\limits_{t=1}^s(\lfloor\frac n 2\rfloor-t)Q_{s-t,n+1-2s}
\sum\limits_{k=1}^{2s-3}\gamma_{s,n+2-2s+k}e_k-\\[3mm]
&-&\sum\limits_{t=1}^{\lfloor\frac n 2\rfloor-1}(\lfloor\frac n 2\rfloor-t)Q_{\lfloor\frac n 2\rfloor-t,2}\sum\limits_{k=1}^{n-4}\gamma_{\lfloor\frac n 2\rfloor,k+3}e_k.
\end{array}$$

 %$$ (\lfloor\frac n 2\rfloor-\frac
%1 2)(n-5)\alpha_3 e_1 + \sum\limits_{s=2}^{\lfloor\frac n 2\rfloor-1}
%(-1)^{s}\sum\limits_{t=1}^s(\lfloor\frac n 2\rfloor-t+\frac 1
%2)Q_{s-t,n+1-2s} \sum\limits_{k=1}^{2s-3}\gamma_{s,n+2-2s+k}e_k$$
%$$ +(-1)^{\lfloor\frac n 2\rfloor}\Big(\sum\limits_{t=1}^{\lfloor\frac n 2\rfloor-1}(\lfloor\frac n
%2\rfloor-t+\frac 1 2)Q_{\lfloor\frac n 2\rfloor-t,2}+  \frac 1 2
%Q_{0,2}\Big)\sum\limits_{k=1}^{n-4}\gamma_{[\frac n
%2],k+3}e_k=0.$$
%$$ (\lfloor\frac n 2\rfloor-\frac
%1 2)(n-5)\alpha_3 e_1 +
%\sum\limits_{k=1}^{n-4}\Big(\sum\limits_{s=[\frac {k+4}
%2]}^{[\frac {n} 2]}
%(-1)^{s}\gamma_{s,n+2-2s+k}\sum\limits_{t=1}^s(\lfloor\frac n
%2\rfloor-t+\frac 1 2)Q_{s-t,n+1-2s} \Big)e_k$$
%
%
%Therefore, we get
Comparing the coefficients at the basis elements we derive
\begin{equation}\label{eq10}\left\{\begin{array}{ll} (\lfloor\frac n 2\rfloor-\frac 1 2)(n-5)\alpha_3 +
\sum\limits_{s=2}^{\lfloor\frac {n} 2\rfloor}
(-1)^{s}\gamma_{s,n+3-2s}\sum\limits_{t=1}^s(\lfloor\frac n
2\rfloor-t+\frac 1 2)Q_{s-t,n+1-2s}=0, & \\[1mm]
\sum\limits_{s=\lfloor\frac {k+4} 2\rfloor}^{\lfloor\frac {n} 2\rfloor}
(-1)^{s}\gamma_{s,n+2-2s+k}\sum\limits_{t=1}^s(\lfloor\frac n
2\rfloor-t+\frac 1 2)Q_{s-t,n+1-2s} =0, & 2 \leq k \leq
n-4.\end{array}\right.\end{equation}

If $l \geq 3,$ then we have
$$\begin{array}{l}
 [x_{\lfloor\frac n 2\rfloor+l},x_{\lfloor\frac n 2\rfloor+l}] =\frac 1 2 [[x_{\lfloor\frac n 2\rfloor+l},x_{\lfloor\frac n 2\rfloor+l-1}],x_1]= \\[3mm]
\qquad = \frac 1 2\sum\limits_{s=l}^{\lfloor\frac n 2\rfloor-l+2}
(-1)^{s+\lfloor\frac n 2\rfloor+l}\sum\limits_{t=1}^s\Big(\begin{array}{c} \lfloor\frac n 2\rfloor+l-2-t\\
2l-4\end{array}\Big)
Q_{s-t,n+1-2s}\sum\limits_{k=1}^{2s-2l+1}\gamma_{s,n+2l-2-2s+k}e_k
+\\[3mm]
\qquad +\frac 1 2 \sum\limits_{s=\max\{\lfloor\frac n 2\rfloor-l+3; l\}}^{\lfloor\frac n 2\rfloor}(-1)^{s+\lfloor\frac n 2\rfloor+l}
\sum\limits_{t=1}^{\lfloor\frac n 2\rfloor-l+2}\Big(\begin{array}{c} \lfloor\frac n 2\rfloor+l-2-t\\
2l-4\end{array}\Big)Q_{s-t,n+1-2s}\sum\limits_{k=1}^{2s-2l+1}\gamma_{s,n+2l-2-2s+k}e_k.
\end{array}$$

On the other hand, in the equality \eqref{eq4.4} for $p=2l-1$ and
$i=\lfloor\frac n 2\rfloor+l$ we have
$$ \begin{array}{l}
[x_{\lfloor\frac n 2\rfloor+l},x_{\lfloor\frac n 2\rfloor+l}] =
\sum\limits_{s=l}^{\lfloor\frac n 2\rfloor-l+1}
(-1)^{s+\lfloor\frac n 2\rfloor+l-1}\sum\limits_{t=1}^s\Big(\begin{array}{c} \lfloor\frac n 2\rfloor+l-2-t\\
2l-3\end{array}\Big)
Q_{s-t,n+1-2s}\sum\limits_{k=1}^{2s-2l+1}\gamma_{s,n+2l-2-2s+k}e_k
+\\[3mm]
\qquad +
\sum\limits_{s=\max\{\lfloor\frac n 2\rfloor-l+2; l\}}^{\lfloor\frac n 2\rfloor}(-1)^{s+\lfloor\frac n 2\rfloor+l-1}\sum\limits_{t=1}^{\lfloor\frac n 2\rfloor-l+1}
\Big(\begin{array}{c} \lfloor\frac n 2\rfloor+l-2-t\\
2l-3\end{array}\Big)Q_{s-t,n+1-2s}\sum\limits_{k=1}^{2s-2l+1}\gamma_{s,n+2l-2-2s+k}e_k.
\end{array}$$

%Comparing this equalities we obtain
%$$ \sum\limits_{s=l}^{\lfloor\frac n 2\rfloor-l+1}
%(-1)^{s}\sum\limits_{t=1}^s\lfloor\Big(\begin{array}{c} \lfloor\frac n 2\rfloor+l-2-t\\
%2l-3\end{array}\Big)  + \frac 1 2 \Big(\begin{array}{c} \lfloor\frac n 2\rfloor+l-2-t\\
%2l-4\end{array}\Big)\rfloorQ_{s-t,n+1-2s}\sum\limits_{k=1}^{2s-2l+1}\gamma_{s,n+2l-2-2s+k}e_k
%+
%$$
%$$
%\sum\limits_{s=\max\{\lfloor\frac n 2\rfloor-l+2; l\}}^{\lfloor\frac n 2\rfloor} \lfloor
%\frac 1 2
%Q_{s-\lfloor\frac n 2\rfloor+l-2, n+1-2s} + \sum\limits_{t=1}^{\lfloor\frac n 2\rfloor-l+1}\Big\{\Big(\begin{array}{c} \lfloor\frac n 2\rfloor+l-2-t\\
%2l-3\end{array}\Big) + \frac 1 2 \Big(\begin{array}{c} \lfloor\frac n 2\rfloor+l-2-t\\
%2l-4\end{array}\Big) \Big\}  Q_{s-t,n+1-2s} \rfloor \times $$$$
%(-1)^{s}\sum\limits_{k=1}^{2s-2l+1}\gamma_{s,n+2l-2-2s+k}e_k= 0.$$

Comparing the coefficients at the basis elements we get
%$$ \sum\limits_{k=1}^{n-2l}
%\Big\{\sum\limits_{s=[\frac {k} 2]+l}^{\lfloor\frac n 2\rfloor}
%(-1)^{s}\gamma_{s,n+2l-2-2s+k}\sum\limits_{t=1}^{\min\{s,[\frac n
%2]-l+1\}}
% \lfloor\Big(\begin{array}{c} \lfloor\frac n 2\rfloor+l-2-t\\
%2l-3\end{array}\Big)  + \frac 1 2 \Big(\begin{array}{c} \lfloor\frac n 2\rfloor+l-2-t\\
%2l-4\end{array}\Big)\rfloorQ_{s-t,n+1-2s} +$$
%$$\sum\limits_{s=\max\{\lfloor\frac n 2\rfloor-l+2; [\frac {k} 2]+l\}}^{\lfloor\frac n 2\rfloor}
%\frac 1 2(-1)^{s}\gamma_{s,n+2l-2-2s+k}Q_{s-\lfloor\frac n 2\rfloor+l-2,
%n+1-2s}\Big\}e_k =0$$

\begin{equation}\label{eq11}\small\left\{\begin{array}{l} \sum\limits_{s=\lfloor\frac {k} 2\rfloor+l}^{\lfloor\frac n 2\rfloor}
(-1)^{s}\gamma_{s,n+2l-2-2s+k}\sum\limits_{t=1}^{\min\{s,\lfloor\frac n 2\rfloor-l+1\}}
 \left [\Big(\begin{array}{c} \lfloor\frac n 2\rfloor+l-2-t\\
2l-3\end{array}\Big)  + \frac 1 2 \Big(\begin{array}{c} \lfloor\frac n 2\rfloor+l-2-t\\
2l-4\end{array}\Big)\right ] Q_{s-t,n+1-2s} +\\[1mm]
$$\sum\limits_{s=\max\{\lfloor\frac n 2\rfloor-l+2; \lfloor\frac {k} 2\rfloor+l\}}^{\lfloor\frac n 2\rfloor}
\frac 1 2(-1)^{s}\gamma_{s,n+2l-2-2s+k}Q_{s-\lfloor\frac n 2\rfloor+l-2,
n+1-2s}=0, \end{array}\right.
\end{equation}
where $3 \leq l \leq \lfloor\frac n 2\rfloor, \ 1  \leq k \leq n-2l.$

Therefore, we obtain following main Theorem of this section.

\begin{teo}
Let $L$ be a Leibniz algebra such that $L/ I \cong n_{n,1}$ and
$I$ is the $L/ I$-module with the minimal faithful representation.
Then $L$ admits a basis $\{ x_1, x_2, \dots, x_n, e_1, e_2, \dots,
e_{n}\}$ such that the multiplications table for this
basis has the form \eqref{eq1}, \eqref{eq4.1}, \eqref{eq4.2},
\eqref{eq4.4} with the restrictions \eqref{eq8}, \eqref{eq9},
\eqref{eq10}, \eqref{eq11}.
\end{teo}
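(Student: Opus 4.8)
The plan is to stitch the preceding lemmas together and then extract the relations coming from the still-unused Leibniz identities. First I would fix dimensions: since $I$ carries the minimal faithful representation of $n_{n,1}$ one has $\dim I=n$, hence $\dim L=2n$, and I would pick a basis $\{x_1,\dots,x_n\}$ of $L/I$ realizing the presentation of $n_{n,1}$ from Theorem \ref{thm2.8} together with a basis $\{e_1,\dots,e_n\}$ of $I$ in which the module action is the one recalled in the Preliminaries. Since $I$ sits in the right annihilator, $[x_i,e_j]=[e_i,e_j]=0$ while $[e_i,x_j]$ is the prescribed action, which is exactly \eqref{eq1}; and reducing the brackets $[x_i,x_j]$ modulo $I$ must reproduce the multiplication of $n_{n,1}$, so automatically $[x_i,x_j]$ has the shape \eqref{eq_table} with the $e$-components $\alpha_{i,j}^k$ free for the moment.

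Next I would apply Lemma \ref{lemma1}: a change of basis on the $x$'s absorbing $e$-terms normalizes all brackets involving $x_1$ or $x_n$, giving \eqref{eq4.1} together with the vanishing $\alpha_{i,j}^n=0$ for $2\le i,j\le n$ that is used afterwards. Then Lemma \ref{lemma2} produces the brackets $[x_i,x_j]$ with $i+j\le n+1$: keeping $\beta_k$ (from $[x_2,x_2]$) and $\gamma_{i,k}$ (from $[x_{i+1},x_i]$) as free parameters, the relations \eqref{eq4.3} coming from the Leibniz identities on $[x_1,[x_i,x_j]]$ and $[x_i,[x_1,x_j]]$ pin down every such bracket by a double induction, the arithmetic being governed by the numbers $Q_{m,k}$ and the Pascal-type recursion \eqref{Q_m}. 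Lemma \ref{lemma3} does the same for $i+j\ge n+2$, by induction first on $p=i+j-n$ and then on $i$, yielding \eqref{eq4.4}. After this every product of basis vectors is expressed through $\alpha_1,\dots,\alpha_5$, the $\beta_k$ and the $\gamma_{i,k}$.

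It remains to read off the constraints on these constants. The observation I would use is that the Leibniz identities not yet consumed are precisely those evaluating a ``diagonal'' bracket $[x_{\lfloor n/2\rfloor+l},x_{\lfloor n/2\rfloor+l}]$ in two different ways: once as $\tfrac12[[x_{\lfloor n/2\rfloor+l},x_{\lfloor n/2\rfloor+l-1}],x_1]$ via Lemmas \ref{lemma2}--\ref{lemma3}, and once directly from \eqref{eq4.4} with $p=2l$ when $n$ is even, respectively $p=2l-1$ when $n$ is odd. Matching the two expansions coefficient by coefficient yields exactly \eqref{eq8} and \eqref{eq9} for $n$ even and \eqref{eq10} and \eqref{eq11} for $n$ odd, the split inside each family corresponding to the smallest admissible $l$ versus the generic one. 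Hence $L$ has a basis with multiplication table \eqref{eq1}, \eqref{eq4.1}, \eqref{eq4.2}, \eqref{eq4.4} subject to \eqref{eq8}--\eqref{eq11}.

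The hard part will be the bookkeeping in this last step --- and, if one also wants these to be the \emph{complete} set of relations, in checking that every remaining Leibniz identity $[[x_i,x_j],x_k]=[[x_i,x_k],x_j]+[x_i,[x_j,x_k]]$ with general $i,j,k\ge 2$ either collapses to an identity among the $Q_{m,k}$ reducible to iterated \eqref{Q_m} or reproduces one of the listed constraints. Controlling the shifted sums $\sum_k\gamma_{\cdot,\cdot}e_k$ and the interplay of the binomial and $Q$ coefficients is where essentially all the computational effort goes; granting it, the theorem follows.
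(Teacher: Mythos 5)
Your proposal follows essentially the same route as the paper: the theorem there is stated as a summary of the preceding development, with Lemmas \ref{lemma1}--\ref{lemma3} supplying the tables \eqref{eq1}, \eqref{eq4.1}, \eqref{eq4.2}, \eqref{eq4.4} and the restrictions \eqref{eq8}--\eqref{eq11} obtained exactly as you describe, by computing $[x_{\lfloor n/2\rfloor+l},x_{\lfloor n/2\rfloor+l}]$ both as $\tfrac12[[x_{\lfloor n/2\rfloor+l},x_{\lfloor n/2\rfloor+l-1}],x_1]$ and directly from \eqref{eq4.4}, split by the parity of $n$ and by minimal versus generic $l$. The only slips are cosmetic: the vanishing $\alpha_{i,j}^n=0$ is derived at the start of the proof of Lemma \ref{lemma2}, not in Lemma \ref{lemma1}, and the completeness check you flag at the end is likewise left implicit in the paper.
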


Now we are in position to give a classification  of such algebras up to isomorphism for the case $n=4$.

In this case we get the following family of algebras denoted by $\mu(\alpha_1,\alpha_2,\alpha_3,\alpha_4,\beta_1,\beta_2,\gamma_1,\gamma_2):$
$$\left\{\begin{array}{ll}
[e_{2}, x_1 ] = e_1,& [x_3,x_1]=x_4 - \alpha_{2} e_3, \\[2mm]
[e_{3},x_1 ] = e_{2},& [x_4,x_1]= -\alpha_{4} e_1 -2\alpha_{2}
e_2,\\[2mm] [e_{4}, x_2] =
e_3,& [x_2,x_2] = \beta_{1}  e_1+\beta_{2}  e_2,\\[2mm]
{} [e_{4}, x_3]=
e_2,& [x_3,x_2] = \gamma_{1}  e_1+\gamma_{2}  e_2 -2\alpha_{3}e_3,\\[2mm]
{} [e_{4}, x_4] =
e_1, & [x_4,x_3] = - \alpha_{3}  e_1,\\[2mm]
{} [x_1,x_2]=-x_3, &  [x_3,x_3] = \frac 1 2\gamma_{2}  e_1-  \alpha_{3} e_2,\\[2mm]
{} [x_1,x_3]=-x_4, & [x_4,x_2] = \frac 1 2\gamma_{2}  e_1-  \alpha_{3} e_2\\[2mm]
{} [x_1,x_1]=\alpha_{1} e_3+\alpha_{2} e_4, & [x_2,x_3] =
(\beta_{2} - \gamma_{1})e_1- \gamma_{2}e_2 + \alpha_{3}e_3,\\[2mm]
{} [x_2,x_1]=x_3+\alpha_{3} e_4, & [x_2,x_4] =
 - \frac 3 2 \gamma_{2}e_1-  \alpha_{3}e_2,\\[2mm]
{} [x_1,x_4]=\alpha_{4} e_1+\alpha_{2} e_2.
\end{array}\right.$$

\begin{teo}
Let $L$ be an $8$-dimensional Leibniz algebra such that $L/ I \cong n_{4,1}$ and
$I$ is the $L/ I$-module with the minimal faithful representation.
Then $L$ is isomorphic to the one of the following pairwise non isomorphic algebras:
$$\begin{array}{|c|c|c|c|}
\hline
\mu(\alpha_1,\alpha_2,1,1,\beta_1,\beta_2,0,1)&\mu(\alpha_1,1,1,0,\beta_1,\beta_2,0,1)&\mu(1,0,1,0,\beta_1,\beta_2,0,1)&\mu(\alpha_1,1,1,1,\beta_1,\beta_2,0,0)\\
\hline
\mu(1,0,1,1,\beta_1,\beta_2,0,0)&\mu(0,0,1,1,1,\beta_2,0,0)&\mu(0,0,1,1,0,1,0,0)&\mu(0,0,1,1,0,0,0,0)\\
\hline
\mu(1,1,1,0,\beta_1,\beta_2,0,0)&\mu(0,1,1,0,1,\beta_2,0,0)&\mu(0,1,1,0,0,1,0,0)&\mu(0,1,1,0,0,0,0,0)\\
\hline
\mu(1,0,1,0,1,\beta_2,0,0)&\mu(1,0,1,0,0,1,0,0)&\mu(0,0,1,0,1,\beta_2,0,0)&\mu(0,0,1,0,0,1,0,0)\\
\hline
\mu(0,0,1,0,0,0,0,0)&\mu(\alpha_1,1,0,1,0,\beta_2,\gamma_1,1)&\mu(\alpha_1,0,0,1,0,1,\gamma_1,1)&\mu(\alpha_1,0,0,1,0,0,1,1)\\
\hline
\mu(\alpha_1,0,0,1,0,0,0,1)&\mu(\alpha_1,1,0,0,0,1,\gamma_1,1)&\mu(\alpha_1,1,0,0,0,0,1,1)&\mu(\alpha_1,1,0,0,0,0,0,1)\\
\hline
\mu(1,0,0,0,0,1,\gamma_1,1)&\mu(1,0,0,0,0,0,1,1)&\mu(1,0,0,0,0,0,0,1)&\mu(0,0,0,0,0,1,\gamma_1,1)\\
\hline
\mu(0,0,0,0,0,0,1,1)&\mu(0,0,0,0,0,0,0,1)&\mu(1,1,0,1,\beta_1,\beta_2,\gamma_1,0)&\mu(0,1,0,1,1,\beta_2,\gamma_1,0)\\
\hline
\mu(0,1,0,1,0,1,\gamma_1,0)&\mu(0,1,0,1,0,0,1,0)&\mu(0,1,0,1,0,0,0,0)&\mu(1,0,0,1,1,\beta_2,\gamma_1,0)\\
\hline
\mu(1,0,0,1,0,1,\gamma_1,0)&\mu(1,0,0,1,0,0,1,0)&\mu(1,0,0,1,0,0,0,0)&\mu(0,0,0,1,1,1,\gamma_1,0)\\
\hline
\mu(0,0,0,1,1,0,1,0)&\mu(0,0,0,1,1,0,0,0)&\mu(0,0,0,1,0,1,1,0)&\mu(0,0,0,1,0,1,0,0)\\
\hline
\mu(0,0,0,1,0,0,1,0)&\mu(0,0,0,1,0,0,0,0)&\mu(1,1,0,0,1,\beta_2,\gamma_1,0)&\mu(1,1,0,0,0,1,\gamma_1,0)\\
\hline
\mu(1,1,0,0,0,0,1,0)&\mu(1,1,0,0,0,0,0,0)&\mu(1,0,0,0,1,1,\gamma_1,0)&\mu(1,0,0,0,1,0,1,0)\\
\hline
\mu(1,0,0,0,1,0,0,0)&\mu(1,0,0,0,0,1,1,0)&\mu(1,0,0,0,0,1,0,0)&\mu(1,0,0,0,0,0,1,0)\\
\hline
\mu(1,0,0,0,0,0,0,0)&\mu(0,1,0,0,1,1,\gamma_1,0)&\mu(0,1,0,0,1,0,1,0)&\mu(0,1,0,0,1,0,0,0)\\
\hline
\mu(0,1,0,0,0,1,\gamma_1,0)&\mu(0,1,0,0,0,0,1,0)&\mu(0,1,0,0,0,0,0,0)&\mu(0,0,0,0,1,1,\gamma_1,0)\\
\hline
\mu(0,0,0,0,0,1,\gamma_1,0)&\mu(0,0,0,0,1,0,1,0)&\mu(0,0,0,0,1,0,0,0)&\mu(0,0,0,0,0,0,1,0)\\
\hline
\mu(0,0,0,0,0,0,0,0)& & &\\
\hline
\end{array}$$
with $\alpha_1,\alpha_2,\beta_1,\beta_2,\gamma_1\in \mathbb{C}.$
\end{teo}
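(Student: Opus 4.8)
By the preceding theorem every such $L$ is isomorphic to a member $\mu(\alpha_1,\alpha_2,\alpha_3,\alpha_4,\beta_1,\beta_2,\gamma_1,\gamma_2)$ of the displayed $8$-parameter family, so the plan is to compute the orbits of this family under linear isomorphism. The first step is to describe the group $\mathcal{G}$ of admissible base changes. Any isomorphism $f$ between two members carries $I=\langle e_1,\dots,e_4\rangle$ onto $I$ (it is the ideal generated by squares), hence induces $\bar f\in\aut(n_{4,1})$ on $L/I$ together with an isomorphism $f|_I$ of $n_{4,1}$-modules, where the action on the target is twisted by $\bar f$. From the natural gradation of $n_{4,1}$ one sees that $\bar f(x_1)=Ax_1+(\text{terms in }x_2,x_3,x_4)$ and $\bar f(x_2)=B_2x_2+B_3x_3+B_4x_4$ with $AB_2\neq0$, while $\bar f(x_3),\bar f(x_4)$ are then forced; and since $I$ is a cyclic uniserial module generated by $e_4$, the map $f|_I$ is fixed by $f(e_4)=c\,e_4+a_1e_1+a_2e_2+a_3e_3$ with $c\neq0$, up to the scalars imposed by $\bar f$. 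Finally $f$ may translate each generator $x_i\mapsto x_i+w_i$ with $w_i\in I$; because $[L,I]=0$, this alters $[x_i,x_j]$ only by the module term $[w_i,x_j]$. So $\mathcal{G}$ is generated by these transformations, and I would fix an explicit parametrization once and for all.

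The second step is to substitute $x_i'=\bar f(x_i)+w_i$, $e_j'=f(e_j)$ into the multiplication table of $\mu$, re-expand, and read off the new structure constants as explicit polynomials in the old parameters and the parameters of $\mathcal{G}$. Two features govern the whole analysis: $\alpha_3$ rescales by a nonzero scalar (essentially $\alpha_3'=\frac{AB_2}{c}\alpha_3$ modulo lower-order corrections), so the primary dichotomy is $\alpha_3=0$ versus $\alpha_3=1$; and the translations $w_i$ together with $a_1,a_2,a_3$ allow one to clear many of the lower-filtration coefficients, while the coefficient $u_2$ of $x_2$ in $\bar f(x_1)$ couples $\alpha_1$ to $\beta_1,\beta_2$ (and, when $\alpha_3=0$, also to $\gamma_1,\gamma_2$).

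In the third step I would normalize. In the branch $\alpha_3=1$ one successively brings $\gamma_2$ to $\{0,1\}$ (after which $\gamma_1$ is forced to $0$), then $\alpha_4$ to $\{0,1\}$, then $\alpha_2$ to $\{0,1\}$ or leaves it free, then $\alpha_1$ to $\{0,1\}$ or free, and finally $\beta_1,\beta_2$ to $\{0,1\}$ or free; reading off the surviving normal forms gives the first $17$ entries of the table. In the branch $\alpha_3=0$ the product $[x_4,x_3]$ and the $e_3$-part of $[x_3,x_2]$ degenerate and $\gamma_1,\gamma_2$ take over as the leading data; running the analogous cascade on $\gamma_2$, then $\gamma_1$, then $\alpha_4,\alpha_2,\alpha_1,\beta_1,\beta_2$ produces the remaining entries. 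The bookkeeping point is that each normalization consumes part of $\mathcal{G}$, so one must track the residual stabilizer before performing the next normalization, and argue that the cascade is exhaustive.

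For the final step, pairwise non-isomorphism, note that whenever the residual stabilizer of a normalized slice acts trivially, the listed representatives of that slice — including the continuous moduli $\alpha_1,\alpha_2,\beta_1,\beta_2,\gamma_1$ — are non-isomorphic directly from the transformation law of the second step. To separate representatives lying in different slices I would use the standard invariants of Leibniz algebras: the dimensions of the terms $L^k$ of the lower central series, of the left and right annihilators, of the centre, and the ranks of the induced map $L/I\times L/I\to I$ and of its symmetrization, checking case by case that these distinguish the families. The main obstacle is the size and the bookkeeping of the third step: the parameters are tightly coupled under $\mathcal{G}$, so the order of normalizations matters and exhaustiveness and irredundancy must be argued together; the transformation law of the second step, though routine, is long and is the most error-prone ingredient.
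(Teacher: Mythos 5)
Your plan coincides with the paper's own proof: the paper likewise fixes a general admissible change of basis on the generators $x_1$, $x_2$, $e_4$ (the remaining basis vectors being determined as products), derives the resulting constraints and the explicit transformation law for the eight structure constants, and then obtains the list by exhausting cases, with your primary dichotomy $\alpha_3=0$ versus $\alpha_3\neq 0$ matching the computed law $\alpha_3'=\alpha_3P_1M_2/T_4$. Two small points of comparison: the paper's computation forces the $x_2$-coefficient of the image of $x_1$ to vanish, so $\alpha_1$ merely rescales (by $P_1^2/(T_4M_2)$) and the couplings you anticipate actually enter through the $x_3,x_4$-components of the image of $x_2$ (affecting $\beta_1$ and $\gamma_1$); and the paper leaves the case analysis and the pairwise non-isomorphism verification implicit, whereas you correctly identify these as steps requiring explicit bookkeeping and invariants.
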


\begin{proof}
Let $L$ be an 8-dimensional Leibniz algebra given by $\mu(\alpha_1,\alpha_2,\alpha_3,\alpha_4,\beta_1,\beta_2,\gamma_1,\gamma_2).$
We make the following change of basis:
$$\begin{array}{l}
x'_1=\displaystyle\sum_{k=1}^{4} P_k x_k+\sum_{k=1}^{4} Q_k e_k,\\  [2mm]
x'_2=\displaystyle\sum_{k=1}^{4} M_k x_k+\sum_{k=1}^{4} N_k e_k,\\   [2mm]
e'_4=\displaystyle\sum_{k=1}^{4} R_k x_k+\sum_{k=1}^{4} T_k e_k,
\end{array}$$
while the other elements of the new basis (i.e. $e_1', e_2', e_3', x_3'$ and $x_4'$) are obtained as products of the above elements.

The table of multiplication in this new basis implies the following restrictions on the coefficients:

$$\begin{array}{l}
P_2=M_1=R_k=0,\qquad 1\leq k\leq 4,\\[3mm]
T_3 = -\displaystyle\frac{T_4 P_3}{P_1},\qquad \quad T_2=-\displaystyle\frac{T_4 P_4}{P_1},\qquad \quad N_4=\alpha_3 M_3,\\[3mm]
Q_4=\displaystyle\frac{\alpha_2 P_1 M_3}{M_2},\\[3mm]
Q_3=-\displaystyle\frac{-\alpha_3 P_3^2 M_2-\alpha_1 P_1^2 M_3+\alpha_2 P_1 P_3 M_3}{P_1 M_2},\\[3mm]
Q_2=-\displaystyle\frac{-2\alpha_2 T_1P_1^2 M_2+\gamma_2 T_4 P_3^2 M_2-2\alpha_3 T_4 P_3P_4M_2+
2\alpha_2 T_4 P_1P_4 M_3-2\alpha_1 T_4 P_1^2M_4}{2 T_4P_1M_2},\\[3mm]
N_3=-\displaystyle\frac{\alpha_3 P_4M_2^2-\alpha_3 P_3 M_2 M_3+\alpha_2 P_1 M_3^2-\alpha_2P_1 M_2 M_4}{P_1M_2},\\[3mm]
N_2=-\displaystyle\frac{-\alpha_3 T_1P_1M_2^2+\beta_2 T_4P_3M_2^2-\gamma_2 T_4 P_4 M_2^2+\gamma_2T_4 P_3M_2M_3-\alpha_3 T_4 P_3 M_2M_4+\alpha_2 T_4P_1 M_3M_4}{T_4P_1M_2},\\[3mm]
T_4P_1M_2\neq 0.
\end{array}$$

Calculating new parameters we obtain:
$$\begin{array}{l}
\alpha'_1=\displaystyle\frac{\alpha_1 P_1^2}{T_4 M_2},\qquad \quad \alpha'_2=\displaystyle\frac{\alpha_2 P_1^2}{T_4},\\[3mm]
\alpha'_3=\displaystyle\frac{\alpha_3 P_1 M_2}{T_4},\qquad \quad \alpha'_4=\displaystyle\frac{\alpha_4 P_1}{T_4},\\[3mm]
\beta'_1=\displaystyle\frac{2\beta_1M_2^2+\gamma_2 M_3^2-2\gamma_2M_2M_4}{2T_4P_1^2 M_2},\\[3mm]
\gamma'_1=\displaystyle\frac{\gamma_1M_2^2-\alpha_3M_3^2+2\alpha_3M_2M_4}{T_4 P_1M_2},\\[3mm]
\beta'_2=\displaystyle\frac{\beta_2 M_2}{T_4 P_1},\qquad \quad \gamma'_2=\displaystyle\frac{\gamma_2 M_2}{T_4}.
\end{array}$$

Considering all the possible cases we obtain the families of algebras listed in the theorem.
\end{proof}

\end{document}